\theoremstyle{plain}
\newtheorem{theorem}{Theorem}[section]
\newtheorem{proposition}[theorem]{Proposition}
\newtheorem{lemma}[theorem]{Lemma}
\newtheorem{corollary}[theorem]{Corollary}
\theoremstyle{definition}
\newtheorem{definition}[theorem]{Definition}
\newtheorem{rem}[theorem]{Remark}
\newtheorem{example}[theorem]{Example}
\newtheorem*{theorem*}{Theorem}
\DeclareMathOperator{\ext}{\textup{ext}}
\DeclareMathOperator{\inter}{\textup{int}}
\DeclareMathOperator{\Per}{\textup{Per}}
\DeclareMathOperator{\dist}{\textup{dist}}
\DeclareMathOperator{\diam}{\textup{diam}}
\DeclareMathOperator{\difm}{\textup{d$\mathcal{H}$}}
\begin{document}

\title[Existence of Anisotropic Minkowski Content]{Existence of Anisotropic Minkowski Content}

\author*[]{\fnm{Filip} \sur{Fryš}\,\orcidlink{0009-0000-2134-0109}}\email{filip.frys@matfyz.cuni.cz}\affil[]{\orgdiv{Mathematical Institute of Charles University}, \orgname{Charles University, Faculty of Mathematics and Physics}, \orgaddress{\street{Sokolovsk\'{a} 83}, \city{Praha 8}, \postcode{186 75}, \state{Czech Republic}}}

%%==================================%%

\abstract{We study the existence of anisotropic Minkowski content and anisotropic outer Minkowski content for sets of finite perimeter in an open set $\Omega\subseteq\mathbb{R}^n$. We prove that, for a set $E$ of finite perimeter in $\Omega$, the Minkowski content of $\partial E$ in $\Omega$ coincides with the perimeter of $E$ in $\Omega$ if and only if the anisotropic Minkowski content of $\partial E$ in $\Omega$ coincides with the arithmetic mean of the anisotropic perimeters of $E$ and $\Omega\setminus E$. More generally, we show that this existence property is independent of the chosen anisotropy. As a consequence, if $\overline{E}\subseteq\Omega$ and the anisotropic outer Minkowski contents of both $E$ and $\Omega\setminus E$ in $\Omega$ exist, then the corresponding isotropic outer Minkowski contents exist as well.}

\keywords{anisotropic (outer) Minkowski content, anisotropic perimeter, convex body}

\pacs[MSC Classification]{28A75, 49Q15, 52A39}

\maketitle\section*{Acknowledgments}
The author is grateful to Jan Rataj for his guidance and valuable comments. This paper grew in part out of the author’s master’s thesis. The author also acknowledges the financial support of SVV-2025-260837, PRIMUS/24/SCI/009, and GAUK 34126.

\section*{Introduction}
\label{secIntro}

Let $\emptyset \neq S \subseteq \mathbb{R}^n$ be a measurable set. The \emph{Minkowski content of $S$} is defined by
\[
\mathcal{M}(S; \mathbb{R}^n)\coloneq
\lim_{\varepsilon\to0_+}
\frac{\lambda^n\big(\{x\in\mathbb{R}^n:\dist(x,S)<\varepsilon\}\big)}{2\varepsilon},
\]
whenever the limit exists, where $\lambda^n$ denotes the $n$-dimensional Lebesgue measure and $\dist(\cdot,S)$ the Euclidean distance from $S$. 

For two sets $A,B\subseteq\mathbb{R}^n$, we write
\[
A\oplus B \coloneq \{a+b : a\in A,\ b\in B\}
\]
for their Minkowski sum. Since
\[
\lambda^n\big(\{x\in\mathbb{R}^n:\dist(x,S)<\varepsilon\}\big)=\lambda^n\big(S\oplus B(0,\varepsilon)\big),
\]
the Minkowski content of $S$ can equivalently be written as
\[
\mathcal{M}(S; \mathbb{R}^n)
=
\lim_{\varepsilon\to0_+}
\frac{\lambda^n\big(S\oplus B(0,\varepsilon)\big)}{2\varepsilon},
\]
whenever the limit exists. If $S$ is the boundary of a convex body, then this limit exists and equals the $(n-1)$-dimensional Hausdorff measure of $S$; see \cite[Chapter 5]{rolf}.

A natural problem is to determine for which more general sets the Minkowski content exists and coincides with the expected surface measure. Classical results show that this is the case for compact $(n-1)$-rectifiable sets, and more generally for compact countably $\mathcal{H}^{n-1}$-rectifiable sets satisfying a suitable lower density condition; see \cite[p.~275]{federer} and \cite[Theorem 2.104]{ambrosio}.

It is also natural to localize the construction. If $\Omega\subseteq\mathbb{R}^n$ is open, one defines the \emph{Minkowski content of $S$ in $\Omega$} by
\begin{equation}\label{kek}
\mathcal{M}(S;\Omega)\coloneq
\lim_{\varepsilon\to0_+}
\frac{\lambda^n\Big(\big((S\cap\Omega)\oplus B(0,\varepsilon)\big)\cap\Omega\Big)}{2\varepsilon},
\end{equation}
whenever the limit exists.

A further natural generalization is obtained by replacing the Euclidean ball by an arbitrary convex body. Let $C\subseteq\mathbb{R}^n$ be a convex body with $0\in\inter(C)$. The corresponding \emph{$C$-anisotropic Minkowski content of $S$ in $\Omega$} is
\[
\mathcal{M}_C(S;\Omega)\coloneq
\lim_{\varepsilon\to0_+}
\frac{\lambda^n\Big(\big((S\cap\Omega)\oplus \varepsilon C\big)\cap\Omega\Big)}{2\varepsilon},
\]
whenever the limit exists. In the same spirit, the \emph{$C$-anisotropic outer Minkowski content} of a measurable set $E\subseteq\Omega$ is defined by
\[
\mathcal{SM}_C(E;\Omega)\coloneq
\lim_{\varepsilon\to0_+}
\frac{1}{\varepsilon}
\lambda^n\Big(\big((E\cap\Omega)\oplus\varepsilon C\big)\cap(\Omega\setminus E)\Big),
\]
whenever the limit exists. Its isotropic counterpart, corresponding to $C=B(0,1)$, was introduced in \cite{Villa4}, while the anisotropic version was studied in \cite{chambolle, Villa2}. 

Minkowski-type volume expansions and parallel enlargements also play an important role in stochastic geometry; see, for instance, \cite{KR06, HL00}. In particular, outer Minkowski content appears in the study of specific area, mean boundary densities, and related quantities for random closed sets and Boolean models; see \cite{Villa4, VillaMean, VillaSpecific}.

\medskip

For a set of finite perimeter $E$ in $\Omega$, the expected first-order terms are expressed by anisotropic perimeter functionals. More precisely, one compares $\mathcal{M}_C(\partial E;\Omega)$ with
\[
\frac{1}{2}\int_{\Omega\cap\partial^*E}\big(h_C(\nu_E)+h_C(-\nu_E)\big)\,d\mathcal{H}^{n-1},
\]
and $\mathcal{SM}_C(E;\Omega)$ with
\[
\int_{\Omega\cap\partial^*E}h_C(\nu_E)\,d\mathcal{H}^{n-1},
\]
where $\nu_E$ denotes the measure-theoretic outer unit normal, $\partial^* E$ stands for the \emph{reduced boundary of $E$}, and $h_C$ is the support function of $C$. For convex bodies these formul{\ae}  are classical; see again \cite[Chapter 5]{rolf}. In the anisotropic setting, existence results under rectifiability and density assumptions were obtained in \cite{Villa2}.

The main point of this paper is not merely to identify the correct first-order term, but to understand the role of the anisotropy itself. A basic question is whether the validity of the expected Minkowski formula is genuinely dependent on the chosen structuring body $C$, or whether it is instead an intrinsic property of the underlying set. Our main result shows that, for the two-sided Minkowski content of the topological boundary of a set of finite perimeter, the answer is the latter: existence for one full-dimensional anisotropy is equivalent to existence for every other. In this sense, the theorem can be viewed as a reduction principle: once the formula is known for one convenient anisotropy, it automatically follows for all others. This is useful in situations where one particular anisotropy is technically easier to handle, or where the isotropic case is already understood and one wants to transfer the conclusion to general anisotropies.

More precisely, in Theorem \ref{theoremExistence} we prove that for a set of finite perimeter $E$ in an open set $\Omega$ and convex body $C$,
\[
\mathcal{M}(\partial E;\Omega)=\mathcal{H}^{n-1}(\Omega\cap\partial^*E)
\]
holds if and only if
\[
\mathcal{M}_C(\partial E;\Omega)
=
\frac{1}{2}
\int_{\Omega\cap\partial^*E}\big(h_C(\nu_E)+h_C(-\nu_E)\big)\,d\mathcal{H}^{n-1}.
\]
The proof combines a decomposition of the reduced boundary into Lipschitz pieces, a local formula from \cite[Lemma 3.3]{Villa2}, and a global covering argument based on the Besicovitch covering theorem.

Our result is also related to the following theorem of Chambolle, Lisini and Lussardi \cite[Theorem 3.4]{chambolle}:
\begin{theorem*}[\cite{chambolle}]
Let $\Omega\subseteq\mathbb{R}^n$ be open, let $E\subseteq\Omega$ be a set of finite perimeter in $\Omega$, and let $C$ be a convex body whose interior contains the origin. If
\[
\mathcal{SM}_{B(0,1)}(E;\Omega)=\mathcal{H}^{n-1}(\Omega\cap\partial^*E),
\]
then
\[
\mathcal{SM}_C(E;\Omega)=\int_{\Omega\cap\partial^*E}h_C(\nu_E)\,d\mathcal{H}^{n-1}.
\]
\end{theorem*}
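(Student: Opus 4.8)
The plan is to prove the stated theorem of Chambolle–Lisini–Lussardi, namely that if the isotropic outer Minkowski content of $E$ in $\Omega$ equals $\mathcal{H}^{n-1}(\Omega\cap\partial^* E)$, then the $C$-anisotropic outer Minkowski content equals $\int_{\Omega\cap\partial^* E}h_C(\nu_E)\,\difm^{n-1}$. The hypothesis is a \emph{rigidity} condition: the isotropic outer Minkowski content is generally only known to be bounded below by the perimeter (by lower-semicontinuity type arguments), so equality says the boundary of $E$ behaves, in a Minkowski-content sense, as regularly as a rectifiable set with no ``hidden'' measure. The strategy is to use this equality to pass from the Euclidean ball to the convex body $C$ by a blow-up / tangent-measure analysis at $\mathcal{H}^{n-1}$-almost every point of the reduced boundary, where $E$ looks like a half-space with normal $\nu_E(x)$ and the outer enlargement by $\varepsilon C$ contributes, to first order, the width $h_C(\nu_E(x))$ of $C$ in the outer normal direction.

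\medskip

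First I would set up the standard comparison between the enlarged volume and a surface integral. For the half-space $H=\{y: y\cdot\nu\le 0\}$ one computes directly that $\lambda^n\big((H\oplus\varepsilon C)\setminus H\big)=\varepsilon\,h_C(\nu)\,\mathcal{H}^{n-1}(\,\cdot\,)$ on any bounded measurable cross-section, since the outward displacement of the boundary hyperplane under $\oplus\varepsilon C$ is exactly $\varepsilon h_C(\nu)$. This identifies $h_C(\nu_E)$ as the correct first-order density and explains why the claimed limit is $\int_{\Omega\cap\partial^* E}h_C(\nu_E)\,\difm^{n-1}$. The second ingredient is the general lower bound: for \emph{any} convex body $C$ containing the origin in its interior one has
\begin{equation*}
    \liminf_{\varepsilon\to0_+}\frac{1}{\varepsilon}\lambda^n\Big(\big((E\cap\Omega)\oplus\varepsilon C\big)\cap(\Omega\setminus E)\Big)\ \ge\ \int_{\Omega\cap\partial^* E}h_C(\nu_E)\,\difm^{n-1},
\end{equation*}
which follows from the De Giorgi structure theorem for sets of finite perimeter together with lower-semicontinuity of the perimeter-type functional; this inequality holds unconditionally and does not use the hypothesis.

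\medskip

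The crux is therefore the matching upper bound, and this is where the hypothesis $\mathcal{SM}_{B(0,1)}(E;\Omega)=\mathcal{H}^{n-1}(\Omega\cap\partial^* E)$ is consumed. The plan is to exploit the containment $C\subseteq B(0,R)$ for $R=\max_{c\in C}|c|=h_C$ evaluated on the unit sphere's maximizing direction, and more sharply, to localize: since $C$ is a convex body, its support function $h_C$ is finite, positively homogeneous and sublinear, and one can sandwich $\varepsilon C$ between translated/scaled balls in a way that is asymptotically sharp in each normal direction. Concretely I would cover $\partial^* E$ (up to an $\mathcal{H}^{n-1}$-null set) by small caps on which $\nu_E$ is nearly constant, and on each cap approximate the outer $\varepsilon C$-enlargement from above by the half-space model, controlling the error by the oscillation of $\nu_E$; summing and letting the mesh go to zero gives
\begin{equation*}
    \limsup_{\varepsilon\to0_+}\frac{1}{\varepsilon}\lambda^n\Big(\big((E\cap\Omega)\oplus\varepsilon C\big)\cap(\Omega\setminus E)\Big)\ \le\ \int_{\Omega\cap\partial^* E}h_C(\nu_E)\,\difm^{n-1},
\end{equation*}
\emph{provided} the analogous isotropic $\limsup$ already equals $\mathcal{H}^{n-1}(\Omega\cap\partial^* E)$, which is precisely the hypothesis. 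The hypothesis prevents ``excess'' enlargement volume concentrating on the non-rectifiable part $\partial E\setminus\partial^* E$: without it the outer enlargement could pick up extra mass near topological-boundary points that are not reduced-boundary points, and the upper bound would fail.

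\medskip

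I expect the main obstacle to be this transfer of the upper bound from the isotropic to the anisotropic setting while keeping the error uniform. The difficulty is that the anisotropic enlargement $\oplus\varepsilon C$ is directionally inhomogeneous, so the cap-by-cap comparison with balls must be done with a modulus of continuity for $h_C$ on the sphere and a uniform control (independent of $\varepsilon$) on the measure of the ``collar'' region where two caps' enlargements overlap; the overlap estimate is where a covering theorem of Besicovitch type enters, exactly as the excerpt indicates for the companion Theorem \ref{theoremExistence}. A clean way to organize this is to first prove the result for $C$ a polytope (finitely many support directions, so $h_C$ is piecewise linear and the half-space model applies exactly on each facet normal cone) and then approximate a general convex body $C$ by inscribed and circumscribed polytopes $P^-_k\subseteq C\subseteq P^+_k$ with $h_{P^{\pm}_k}\to h_C$ uniformly on the sphere, passing the equality through the squeeze. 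The monotonicity $P^-_k\subseteq C\subseteq P^+_k$ gives $\mathcal{SM}_{P^-_k}\le \mathcal{SM}_{C}\le \mathcal{SM}_{P^+_k}$ for the corresponding $\limsup$/$\liminf$, and since both polytope contents converge to the common integral $\int_{\Omega\cap\partial^* E}h_C(\nu_E)\,\difm^{n-1}$, the limit for $C$ exists and equals it.
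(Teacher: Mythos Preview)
The paper does \emph{not} prove this theorem. It is quoted verbatim from \cite[Theorem 3.4]{chambolle} as background, and the paper's own contribution (Theorem~\ref{theoremExistence}) is an \emph{analogue} for the two-sided Minkowski content $\mathcal{M}_C(\partial E;\Omega)$ rather than the outer content $\mathcal{SM}_C(E;\Omega)$. So there is no ``paper's own proof'' of this statement to compare against.

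That said, your sketch diverges substantially from the method the paper uses for its analogue. The paper's argument for Theorem~\ref{theoremExistence} works by decomposing $\partial^*E\cap\Omega$ into countably many disjoint compact pieces $S_k$ of Lipschitz graphs (Remark~\ref{R1}), invoking Lemma~\ref{crucial} to compute $\mathcal{M}_C(S_k;\Omega)$ exactly for each piece, and then using a Besicovitch covering argument to show that the remainder $(\partial E)\setminus\bigcup_{k\le K}S_k$ contributes arbitrarily little to the $(\varepsilon,C')$-content once the isotropic content already matches the perimeter. There is no blow-up, no cap decomposition keyed to the oscillation of $\nu_E$, and no polytope approximation.

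Your proposal, by contrast, is a plausible outline but has a genuine gap in the upper-bound step. The polytope squeeze $P^-_k\subseteq C\subseteq P^+_k$ reduces the problem to polytopes only if you already know $\mathcal{SM}_{P^\pm_k}(E;\Omega)$ exists and equals $\int h_{P^\pm_k}(\nu_E)\,\difm^{n-1}$; but that is the same theorem with $C$ replaced by a polytope, and your remark that ``the half-space model applies exactly on each facet normal cone'' does not explain why the upper bound holds for $\mathcal{SM}_{P}(E;\Omega)$ when $E$ is not itself a half-space. Likewise, the cap-covering paragraph asserts that the isotropic hypothesis controls the excess on $\partial E\setminus\partial^*E$, but you do not say \emph{how}: the mechanism in the paper (and presumably in \cite{chambolle}) is that the isotropic equality forces the $\varepsilon$-collar volume near the bad set to be $o(\varepsilon)$, and a Besicovitch-type counting argument then transfers this smallness to the anisotropic collar. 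Without spelling out that transfer, the upper bound remains an assertion rather than a proof.
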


The converse was suggested in \cite[Remark 3.5]{chambolle}, but no proof was given. Using Theorem \ref{theoremExistence}, we obtain a partial converse of a slightly different form: under the additional assumption $\overline{E}\subseteq\Omega$, if both anisotropic outer Minkowski contents
\[
\mathcal{SM}_C(E;\Omega)
\quad\text{and}\quad
\mathcal{SM}_C(\Omega\setminus E;\Omega)
\]
have the expected perimeter values, then the corresponding isotropic outer Minkowski contents also have the expected values. Thus, under this natural localization assumption, the validity of the outer Minkowski formul{\ae} for $E$ and its complement is likewise independent of the chosen anisotropy.

The paper is organized as follows. In Section~\ref{secPrelim}, we fix notation and recall the necessary background from geometric measure theory and the theory of anisotropic Minkowski content. In Section~\ref{secFurtherResults}, we establish several basic properties of anisotropic Minkowski and outer Minkowski contents and compare the relevant functionals. Finally, in Section~\ref{secMainResult}, we prove Theorem~\ref{theoremExistence} and study the relation between the anisotropic Minkowski content of the reduced boundary and the anisotropic outer Minkowski content of the sets $E^1$ and $E^0$; see Proposition~\ref{reduced}.

\section{Preliminaries}
\label{secPrelim}
\subsection{Notation}
 Let $n\in\mathbb{N}$ (so $n\geq1$). For $x, y\in\mathbb{R}^n$, we denote by $x\cdot y=\sum_{i=1}^{n}x_i y_i$ the \textit{Euclidean scalar product of $x$ and $y$} and by $|\cdot|$ the induced \emph{Euclidean norm}.

If $A, B\subseteq\mathbb{R}^n$ are arbitrary sets and $E\subseteq\mathbb{R}^n$ is measurable, then: \begin{itemize} 
\item $A\oplus B\coloneq \{a+b:a\in A,\; b\in B\}$ denotes the \textit{Minkowski sum of $A$ and $B$};
    \item for $r\in\mathbb{R}$, the set $rA\coloneq\{ra:a\in A\}$ is the \textit{$r$-multiple of $A$};
    \item $A\Delta B\coloneq (A\setminus B)\cup(B\setminus A)$ denotes the \textit{symmetric difference of $A$ and $B$}; 
    \item $\inter (A)$, $\ext (A)$, $\partial A$, $\overline{A}$ denote the \textit{interior of $A$}, the \textit{exterior of $A$}, the \textit{boundary of $A$} and the \textit{closure of $A$}, respectively;
    \item if $A\neq\emptyset$, then $\dist(\cdot, A)$ denotes the classical \textit{Euclidean distance from $A$} and $\diam(A)$ denotes the \textit{diameter of $A$};
    \item $\lambda^n(E)$ denotes the \textit{Lebesgue measure of $E$} and $\mathcal{H}^s(A)$ the \textit{$s$-dimensional Hausdorff measure of $A$}, where $s\geq0$;
    \item $\chi_A$ denotes the \textit{characteristic function of $A$}.
\end{itemize}

Further, let $B(x,r)$ denote the (closed) ball centered at $x$ with radius $r>0$. 

Let $\mu$ be a nonnegative measure, and let $B$ be a $\mu$-measurable set.  
The symbol $\mu\Big|_B$ stands for the restriction of $\mu$ to $B$.

We abbreviate “almost everywhere” by a.e. and “almost all” by a.a.  

The space $\mathcal{C}_c^{\infty}(\Omega)$ consists of all infinitely differentiable functions on $\Omega$ with compact support.  

Let $\{E_\varepsilon\}_{\varepsilon>0}$ be a family of $\lambda^n$-measurable subsets of an open set $\Omega\subseteq\mathbb{R}^n$.  
We say that $\{E_\varepsilon\}_{\varepsilon>0}$ \emph{converges to} a set $E$ \emph{in} $L^1_{\mathrm{loc}}(\Omega)$ if  
\[
\chi_{E_\varepsilon} \xrightarrow[\varepsilon\to0_+]{} \chi_E 
\quad\text{in } L^1_{\mathrm{loc}}(\Omega).
\]

The symbol $\nabla$ will be used for the total derivative, and we identify the derivative with its representing matrix whenever convenient.

\subsection{Geometric Measure Theory}

In this subsection, we recall several useful notions and results from geometric measure theory. 

\begin{definition}\label{rectif}
Let $k \in \mathbb{N}_0$ with $k \leq n$, and let $S \subseteq \mathbb{R}^n$ be $\mathcal{H}^k$-measurable.  
We say that $S$ is \textit{countably $\mathcal{H}^{k}$-rectifiable} if there exists a sequence $\{f_i\}_{i=0}^{\infty}$ of Lipschitz mappings $f_i \colon \mathbb{R}^k \to \mathbb{R}^n$ such that 
\[
\mathcal{H}^k\!\left(S \setminus \bigcup_{i \in \mathbb{N}_0} f_i(\mathbb{R}^k)\right) = 0.
\]
\end{definition}

\begin{rem}\label{R1}
Let $S \subseteq \mathbb{R}^n$ be a countably $\mathcal{H}^{n-1}$-rectifiable set with $\mathcal{H}^{n-1}(S) < \infty$.  
Then there exists a countable family of pairwise disjoint compact subsets of $S$ that covers $S$ up to an $\mathcal{H}^{n-1}$-negligible set, each of which is contained in an $(n-1)$-dimensional Lipschitz graph and has finite $\mathcal{H}^{n-1}$-measure (see \cite[Lemma 11.1 and Remark 11.7]{simon}). 
\end{rem}

We next recall the notion of a set of finite perimeter.

\begin{definition}
Let $\Omega \subseteq \mathbb{R}^n$ be open and $E \subseteq \Omega$ measurable.  
We say that $E$ is a \textit{set of finite perimeter in $\Omega$} if the distributional derivative of $\chi_E$, denoted by $D\chi_E$, is an $\mathbb{R}^n$-valued Radon measure with finite total variation in $\Omega$, that is,
\[
\int_E \nabla f \, \mathrm{d}x = - \int_\Omega f \, \mathrm{d}(D\chi_E)
\quad \text{for all } f \in \mathcal{C}_c^\infty(\Omega).
\]
The \textit{perimeter of $E$ in $\Omega$} is defined by
\[
\Per(E; \Omega) \coloneq \|D\chi_E\|(\Omega),
\]
that is, the total variation of $D\chi_E$ in $\Omega$.
\end{definition}

\begin{definition}\label{outer}
Let $E \subseteq \mathbb{R}^n$ be measurable and $x \in \mathbb{R}^n$.  
We define the \textit{upper} and \textit{lower Lebesgue densities} of $E$ at $x$ by
\begin{align*}
\Theta_n(E, x)_* &\coloneq \liminf_{r \to 0_+} \frac{\lambda^n|_E(B(x, r))}{\lambda^n(B(x, r))}, 
&
\Theta_n(E, x)^* &\coloneq \limsup_{r \to 0_+} \frac{\lambda^n|_E(B(x, r))}{\lambda^n(B(x, r))}.
\end{align*}
If the two limits coincide, we denote their common value by $\Theta_n(E, x)$ and call it the \textit{Lebesgue density} of $E$ at $x$.

\smallskip
For $t \in [0,1]$, define
\[
E^t \coloneq \{x \in \mathbb{R}^n : \Theta_n(E, x) = t\}.
\]
The \textit{essential boundary} of $E$ is then
\[
\partial_* E \coloneq \mathbb{R}^n \setminus (E^0 \cup E^1).
\]
We also define the \textit{reduced boundary} of $E$ by
\[
\partial^* E \coloneq 
\Big\{x \in E^{1/2} : 
\exists\, \nu_E(x) \in \mathbb{S}^{n-1} \text{ s.t. }
\frac{E - x}{\varepsilon} \xrightarrow[\varepsilon \to 0_+]{} 
\{y \in \mathbb{R}^n : y \cdot \nu_E(x) \le 0\}
\Big\},
\] where the $L^1_{\mathrm{loc}}(\mathbb{R}^n)$-convergence of sets is considered.
If $x \in \partial^* E$, the vector $\nu_E(x)$ is called the \textit{outer unit normal to $E$ at $x$}.
\end{definition}

We recall the following fundamental result (see \cite[Theorem 3.59]{ambrosio}):

\begin{theorem}\label{theorem}
Let $E \subseteq \mathbb{R}^n$ be measurable.  
Then $\partial^* E$ is countably $\mathcal{H}^{n-1}$-rectifiable. If, in addition, $E$ has finite perimeter, then $$\|D\chi_E\| = \mathcal{H}^{n-1}\!\big|_{\partial^* E},$$ and for $\mathcal{H}^{n-1}\!\big|_{\partial^* E}$-a.e. $x \in \partial^* E$ one has
\[
1 = \lim_{\varepsilon \to 0_+} 
\frac{\mathcal{H}^{n-1}\big(\partial^* E \cap B(x, \varepsilon)\big)}{\omega_{n-1} \varepsilon^{n-1}}.
\]
\end{theorem}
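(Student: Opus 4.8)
The plan is to reduce all three assertions to the single analytic core of De~Giorgi's structure theorem, namely a blow-up lemma for the \emph{measure-theoretic} reduced boundary. Write $\mathcal{F}E$ for the set of points $x$ at which the Radon--Nikod\'ym derivative $\nu_E(x)\coloneq\frac{dD\chi_E}{d\Vert D\chi_E\Vert}(x)$ exists and satisfies $|\nu_E(x)|=1$. By the Besicovitch differentiation theorem this derivative exists and has unit length for $\Vert D\chi_E\Vert$-a.a.\ $x$, so $\Vert D\chi_E\Vert\big(\mathbb{R}^n\setminus\mathcal{F}E\big)=0$ and $\Vert D\chi_E\Vert$ is concentrated on $\mathcal{F}E$. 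I would first prove that every point of $\mathcal{F}E$ lies in $\partial^* E$ with the same normal, that $\Vert D\chi_E\Vert=\mathcal{H}^{n-1}\big|_{\mathcal{F}E}$, and that $\mathcal{F}E$ coincides with $\partial^* E$ up to an $\mathcal{H}^{n-1}$-negligible set; the three statements of the theorem then drop out.

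The heart of the argument---and the step I expect to be the main obstacle---is the blow-up lemma: for $x\in\mathcal{F}E$ the rescaled sets $E_{x,r}\coloneq(E-x)/r$ converge in $\mathcal{L}^1_{\mathrm{loc}}(\mathbb{R}^n)$, as $r\to0_+$, to the halfspace $H\coloneq\{y\in\mathbb{R}^n;\;y\cdot\nu_E(x)\leq0\}$, and moreover $\lim_{r\to0_+}\Vert D\chi_E\Vert\big(B(x, r)\big)/(\omega_{n-1}r^{n-1})=1$. To establish this I would exploit the scaling identity $\Vert D\chi_{E_{x,r}}\Vert\big(B(0, \rho)\big)=r^{-(n-1)}\Vert D\chi_E\Vert\big(B(x, r\rho)\big)$ together with the compactness theorem for sets of finite perimeter: a finite upper-density estimate for $\Vert D\chi_E\Vert$ at $x$ (proved via the relative isoperimetric inequality) gives uniform bounds on the rescaled perimeters $\Vert D\chi_{E_{x,r}}\Vert\big(B(0, \rho)\big)$ along $r\to0_+$, so that compactness extracts, along any infinitesimal sequence, a limit $F$ of locally finite perimeter. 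The defining property of $\mathcal{F}E$ forces $D\chi_F$ to point in the single direction $\nu_E(x)$; the delicate point is to rule out degeneracies and to identify $F$ with the halfspace $H$, which I would do by combining lower semicontinuity of the total variation, the constancy-of-direction information, and a one-dimensional sectioning argument, or De~Giorgi's original iteration on the excess. Since the limit is independent of the subsequence the full limit exists, and evaluating the total variation of the halfspace on balls yields the density $1$.

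Granting the blow-up lemma, the remaining steps are comparatively routine. The density identity $\lim_{r\to0_+}\Vert D\chi_E\Vert\big(B(x, r)\big)/(\omega_{n-1}r^{n-1})=1$ on $\mathcal{F}E$, combined with the standard density comparison between a Radon measure and Hausdorff measure (Federer's density theorems, which upgrade a $\mu$-a.e.\ unit density into an identity of measures), yields $\Vert D\chi_E\Vert=\mathcal{H}^{n-1}\big|_{\mathcal{F}E}$. Rectifiability of $\mathcal{F}E$ follows because the blow-up exhibits, at each of its points, the approximate tangent hyperplane $\nu_E(x)^{\perp}$; a set of finite $\mathcal{H}^{n-1}$-measure admitting an approximate tangent plane $\mathcal{H}^{n-1}$-a.e.\ is countably $\mathcal{H}^{n-1}$-rectifiable (alternatively one reconstructs De~Giorgi's local Lipschitz graphs directly from the excess decay).

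Finally I would identify $\mathcal{F}E$ with $\partial^* E$ up to $\mathcal{H}^{n-1}$-null sets. The inclusion $\mathcal{F}E\subseteq\partial^* E$, with matching normals, is exactly the blow-up statement; for the converse it suffices to note $\partial^* E\subseteq\partial_*E$ and to invoke Federer's theorem $\mathcal{H}^{n-1}\big(\partial_*E\setminus\mathcal{F}E\big)=0$, so that $\partial^* E$ and $\mathcal{F}E$ differ by an $\mathcal{H}^{n-1}$-negligible set. This transfers both rectifiability and the measure identity to $\partial^* E$, giving $\Vert D\chi_E\Vert=\mathcal{H}^{n-1}\big|_{\partial^* E}$; and since then $\mathcal{H}^{n-1}\big(\partial^* E\cap B(x, \varepsilon)\big)=\Vert D\chi_E\Vert\big(B(x, \varepsilon)\big)$, the density relation claimed in the theorem is precisely the blow-up density evaluated at the ($\mathcal{H}^{n-1}$-a.a.) points of $\partial^* E$ lying in $\mathcal{F}E$.
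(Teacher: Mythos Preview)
The paper does not give its own proof of this theorem: it merely states the result and cites \cite[Theorem 3.59]{ambrosio}. Your proposal is a faithful outline of the classical De~Giorgi--Federer argument that is carried out in that reference---introduce the measure-theoretic reduced boundary $\mathcal{F}E$ via the Radon--Nikod\'ym derivative of $D\chi_E$, prove the blow-up lemma at points of $\mathcal{F}E$, deduce the density identity and rectifiability, and then use Federer's theorem $\mathcal{H}^{n-1}(\partial_*E\setminus\mathcal{F}E)=0$ to pass from $\mathcal{F}E$ to $\partial^* E$. So there is nothing to compare against in the paper itself; your sketch matches the approach of the cited source.

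One minor caveat: the theorem as stated in the paper says only ``$E\subseteq\mathbb{R}^n$ measurable'', but your argument (and the one in \cite{ambrosio}) tacitly assumes that $E$ has locally finite perimeter so that $D\chi_E$ is a Radon measure and $\Vert D\chi_E\Vert$ makes sense. You should make this hypothesis explicit; without it the very object $\Vert D\chi_E\Vert$ in the statement is undefined.
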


\begin{rem}\label{remarko}
We recall the following facts (see \cite[Section 3.3]{ambrosio}):
\begin{itemize}
\item If $E \subseteq \mathbb{R}^n$ is measurable, then $x \in \mathrm{int}(E)$ implies $\Theta_n(E, x) = 1$, while $x \in \mathrm{ext}(E)$ implies $\Theta_n(E, x) = 0$.  
Hence $\partial^* E \subseteq \partial_* E \subseteq \partial E$.  
Since the definition of $\partial^* E$ does not depend on the particular representative of $\chi_E$, we obtain
\[
\overline{\partial^* E} \subseteq \bigcap_{\{E' \subseteq \mathbb{R}^n : \lambda^n(E \Delta E') = 0\}} \partial E'.
\]
According to \cite[Proposition 12.19 and Remark 15.3]{ASE}, if $E$ has finite perimeter, then there exists a Borel set $E'$ such that $\lambda^n(E \Delta E') = 0$ and $\partial E' = \overline{\partial^* E}$.  
In fact, it follows from the proof that one may take $E' \coloneq E^1$.

\item If $E$ is a set of finite perimeter in $\Omega$, then
\[
\Per(E; \Omega) = \mathcal{H}^{n-1}(\Omega \cap \partial_* E)
= \mathcal{H}^{n-1}(\Omega \cap \partial^* E).
\]

\item If $\lambda^n\big((E \Delta F) \cap \Omega\big) = 0$, then $\Per(E; \Omega) = \Per(F; \Omega)$.

\item If $E$ is a set of finite perimeter in $\Omega$, then so is $\Omega \setminus E$, and we have
\[
\nu_E = -\nu_{\Omega \setminus E} 
\quad \left(\mathcal{H}^{n-1}\Big|_{\Omega \cap \partial^* E}\right)\text{-a.e.},
\]
\[
\mathcal{H}^{n-1}(\Omega \cap \partial^* E)
= \mathcal{H}^{n-1}(\Omega \cap \partial^*(\Omega \setminus E)),
\quad
\Per(E; \Omega) = \Per(\Omega \setminus E; \Omega).
\]
\end{itemize}
\end{rem}

Finally, we recall the Besicovitch covering theorem (see \cite[Subsection 1.5.2, Theorem 2]{evans}):

\begin{theorem}\label{bes}
Let $A \subseteq \mathbb{R}^n$ and $\rho \colon A \to (0, \infty)$ be a bounded function.  
Then there exist $N \in \mathbb{N}$, depending only on $n$, and an at most countable set $S \subseteq A$ such that
\begin{align*}
A &\subseteq \bigcup_{x \in S} B(x, \rho(x)), 
&
\sum_{x \in S} \chi_{B(x, \rho(x))} &\le N.
\end{align*}
\end{theorem}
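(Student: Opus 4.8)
\emph{Proof proposal.} The plan is to build the covering family $S$ by a greedy selection that always grabs a ball of nearly maximal radius, and then to control the overlap by an essentially local packing estimate. Since $\rho$ is bounded, set $R\coloneq\sup_{x\in A}\rho(x)<\infty$. I would select points inductively: having chosen $x_1,\dots,x_{j-1}$, put $A_j\coloneq A\setminus\bigcup_{i<j}B(x_i,\rho(x_i))$ and, if $A_j\neq\emptyset$, pick $x_j\in A_j$ with $\rho(x_j)>\tfrac34\sup_{A_j}\rho$ (stopping when $A_j=\emptyset$). Two elementary consequences of this rule are used repeatedly: first, for $i<j$ one has $x_j\notin B(x_i,\rho(x_i))$, hence $|x_i-x_j|>\rho(x_i)$; second, since $A_j\subseteq A_i$ gives $\sup_{A_j}\rho\le\sup_{A_i}\rho$, one obtains the comparability $\rho(x_j)\le\sup_{A_j}\rho\le\sup_{A_i}\rho<\tfrac43\rho(x_i)$ for all $i<j$. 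A short computation with these two facts shows that the shrunken balls $\{B(x_j,\rho(x_j)/3)\}_j$ are pairwise disjoint.

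First I would record that $S\coloneq\{x_j\}_j$ is at most countable: the shrunken balls are pairwise disjoint and each contains a nonempty open ball, so in the separable space $\mathbb{R}^n$ there can be only countably many of them. Next I would establish the covering property $A\subseteq\bigcup_{x\in S}B(x,\rho(x))$, treating first the case of bounded $A$. Then all shrunken balls lie in a fixed bounded set, so disjointness and a volume comparison give $\sum_j\big(\rho(x_j)/3\big)^n<\infty$, forcing $\rho(x_j)\to0$ whenever the selection is infinite. If some $a\in A$ were left uncovered, then $a\in A_j$ for every $j$, whence $\rho(x_j)>\tfrac34\rho(a)>0$ for all $j$, contradicting $\rho(x_j)\to0$; thus every point of $A$ is covered. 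For general (possibly unbounded) $A$ I would partition $\mathbb{R}^n$ into spherical shells of width comparable to $R$, apply the bounded case to the centers in each shell, and glue the outputs: since every selected ball has radius at most $R$, it can meet only a bounded number of neighbouring shells, so the global overlap grows by at most a fixed dimensional factor.

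The crux is the bounded-overlap estimate $\sum_{x\in S}\chi_{B(x,\rho(x))}\le N$, and I would obtain it via the intersection graph of the selected balls. For each $k$ set $I_k\coloneq\{\,j<k:\;B(x_j,\rho(x_j))\cap B(x_k,\rho(x_k))\neq\emptyset\,\}$; the heart of the matter is a dimensional bound $|I_k|\le M(n)$. I would split $I_k$ by radius. For indices with $\rho(x_j)\le3\rho(x_k)$ the relations above place the centers inside $B(x_k,4\rho(x_k))$ while keeping them pairwise $\tfrac34\rho(x_k)$-separated, so a standard packing count caps their number. For indices with $\rho(x_j)>3\rho(x_k)$ one checks that $x_k$ sits just outside each such ball, namely $\rho(x_j)<|x_j-x_k|<\tfrac43\rho(x_j)$, and then a lower bound on the angle subtended at $x_k$ by any two such centers, combined with a packing bound on the sphere of directions, limits their number as well.

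Granting $|I_k|\le M(n)$, a greedy colouring of the indices with $M(n)+1$ colours—each $x_k$ receiving a colour avoided by all $j\in I_k$—splits $S$ into $N\coloneq M(n)+1$ subfamilies of pairwise disjoint balls, which is precisely $\sum_{x\in S}\chi_{B(x,\rho(x))}\le N$ with $N=N(n)$. I expect the large-radius angular estimate in the last paragraph to be the main obstacle, since it is the one place where the genuine geometry of $\mathbb{R}^n$ (rather than mere volume comparison) enters, and it is exactly what pins the constant $N$ to the dimension alone.
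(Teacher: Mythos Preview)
The paper does not supply its own proof of this statement: Theorem~\ref{bes} is merely \emph{recalled} as the classical Besicovitch covering theorem, with a reference to \cite[subsection 1.5.2, Theorem~2]{evans}. Your sketch follows precisely the standard textbook argument (greedy near-maximal selection, disjointness of the shrunken balls, covering via $\rho(x_j)\to0$, and the split of $I_k$ into small-radius indices handled by a volume packing count and large-radius indices handled by an angular separation bound, followed by a graph colouring), which is essentially the proof given in the cited reference. So there is nothing to compare against in the paper itself, and your outline is both correct and the expected one; the one passage that genuinely requires care, as you already flag, is the angular lower bound in the large-radius case.
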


         \subsection{Anisotropic (Outer) Minkowski Content}
This subsection summarizes fundamental results on the anisotropic (outer) Minkowski content, following \cite{chambolle} and \cite{Villa2}.

\begin{definition}\label{convexity}
A \textit{convex body} is a non-empty, compact, and convex subset of $\mathbb{R}^n$.  
The collection of all convex bodies whose interiors contain the origin is denoted by $\mathcal{C}^n_0$.

Let $C \in \mathcal{C}^n_0$.  
The \textit{support function} of $C$ is defined by
\[
h_C(y) \coloneq \sup_{x \in C} x \cdot y, \quad y \in \mathbb{R}^n.
\]
The \textit{polar function} of $C$ is given by
\[
h_C^{\circ}(x) \coloneq \sup_{\substack{y \in \mathbb{R}^n \\ h_C(y) \leq 1}} x \cdot y, \quad x \in \mathbb{R}^n.
\]
For a nonempty set $E \subseteq \mathbb{R}^n$, we define the \textit{$C$-anisotropic distance function to $E$} by
\[
\textup{dist}_C(x, E) \coloneq \inf_{y \in E} h_C^{\circ}(x - y), \quad x \in \mathbb{R}^n.
\]
\end{definition}

\begin{rem}
Let $C \in \mathcal{C}^n_0$.  
\begin{itemize}
    \item The \textit{polar set} of $C$ is defined by $C^{\circ} \coloneq \{ x \in \mathbb{R}^n : h_C(x) \leq 1 \}$.  
    Then $h_C^{\circ} = h_{C^{\circ}}$ and $(C^{\circ})^{\circ} = C$.
    \item The function $x \mapsto h_C(x)$ is sublinear (hence convex) and Lipschitz continuous, and satisfies $h_{-C}(x) = h_C(-x)$ for all $x \in \mathbb{R}^n$.  
    Moreover, $h_{aC} = a h_C$ for any $a > 0$.
    \item If $a C \subseteq B(0,1) \subseteq b C$ for some $a,b > 0$, then for all $\nu \in \mathbb{R}^n$,
    \begin{equation}\label{hurich}
        a\,h_C(\nu) \leq |\nu| \leq b\,h_C(\nu).
    \end{equation}
\end{itemize}
\end{rem}

\begin{definition}
Let $Q \subseteq \mathbb{R}^n$ with $0 \in Q$, let $\Omega \subseteq \mathbb{R}^n$ be open, and $\varepsilon > 0$.  
The \textit{$(\varepsilon, Q)$-anisotropic outer Minkowski content (in $\Omega$)} is defined for $\lambda^n$-measurable sets $E \subseteq \mathbb{R}^n$ by
\[
\mathcal{SM}_{\varepsilon, Q}(E; \Omega)
    \coloneq \frac{1}{\varepsilon}
    \lambda^n\!\Big( \big( (E \cap \Omega) \oplus \varepsilon Q \big) \cap (\Omega \setminus E) \Big).
\]
We then define
\[
\mathcal{SM}_Q(E; \Omega)_* \coloneq
    \liminf_{\varepsilon \to 0_+} \mathcal{SM}_{\varepsilon, Q}(E; \Omega),
    \qquad
\mathcal{SM}_Q(E; \Omega)^* \coloneq
    \limsup_{\varepsilon \to 0_+} \mathcal{SM}_{\varepsilon, Q}(E; \Omega).
\]
The former is the \textit{lower} and the latter the \textit{upper $Q$-anisotropic outer Minkowski content of $E$ in $\Omega$}.  
If these two limits coincide, their common value
\[
\mathcal{SM}_Q(E; \Omega) \coloneq \mathcal{SM}_Q(E; \Omega)_*
\]
is called the \textit{$Q$-anisotropic outer Minkowski content of $E$ in $\Omega$}.  
Whenever we write $\mathcal{SM}_Q(E; \Omega)$, we implicitly assume that the limit exists.
\end{definition}

The case $Q = B(0,1)$ is of particular importance.  
We recall a sufficient condition for the existence of the limit $\mathcal{SM}_{B(0,1)}(E; \mathbb{R}^n)$ (\cite[Theorem~3.1]{vrv}):

\begin{theorem}\label{closed}
Let $E \subseteq \mathbb{R}^n$ be closed with $\partial E$ countably $\mathcal{H}^{n-1}$-rectifiable and bounded.  
Assume that there exist $\gamma > 0$ and a probability measure $\eta$ absolutely continuous with respect to $\mathcal{H}^{n-1}$ such that
\[
\forall r > 0, \; \forall x \in \partial E:\quad
\eta\big(B(x, r)\big) \geq \gamma r^{n-1}.
\]
Then
\[
\mathcal{SM}_{B(0,1)}(E; \mathbb{R}^n)
    = \textup{Per}(E; \mathbb{R}^n)
      + 2\,\mathcal{H}^{n-1}\big((\partial E) \cap E^{0}\big).
\]
\end{theorem}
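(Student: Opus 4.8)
The plan is to deduce the one-sided (outer) content from the two-sided Minkowski content of $\partial E$---which is already controlled by \cite[Theorem 2.104]{ambrosio}---together with a density decomposition of $\partial E$ that accounts for the asymmetry between the interior and the exterior of $E$. Since $E$ is closed, for $x\notin E$ we have $\dist(x,E)=\dist(x,\partial E)$, so the outer sausage is exactly $A^+_\varepsilon:=\big(E\oplus\varepsilon B(0,1)\big)\setminus E=\{x\notin E:\dist(x,\partial E)\le\varepsilon\}$ and $\mathcal{SM}_{\varepsilon,B(0,1)}(E;\mathbb{R}^n)=\lambda^n(A^+_\varepsilon)/\varepsilon$. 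Writing $A^-_\varepsilon:=\{x\in\inter(E):\dist(x,\partial E)\le\varepsilon\}$ for the inner sausage, the full $\varepsilon$-neighbourhood of $\partial E$ coincides, up to the $\lambda^n$-negligible set $\partial E$, with $A^+_\varepsilon\cup A^-_\varepsilon$.

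First I would check the hypotheses of \cite[Theorem 2.104]{ambrosio} for $S=\partial E$: it is compact (closed and bounded), countably $\mathcal{H}^{n-1}$-rectifiable by assumption, and the probability measure $\eta$ furnishes the lower bound $\eta(B(x,r))\ge\gamma r^{n-1}$. A mass-distribution estimate against $\eta$ (using Theorem \ref{bes} to control the overlap of a covering) also yields $\mathcal{H}^{n-1}(\partial E)<\infty$; since $\partial_*E\subseteq\partial E$, Federer's criterion shows $E$ has finite perimeter, and Theorem \ref{theorem} together with Remark \ref{remarko} gives $\Per(E;\mathbb{R}^n)=\mathcal{H}^{n-1}(\partial^*E)=\mathcal{H}^{n-1}(\partial_*E)$. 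Applying \cite[Theorem 2.104]{ambrosio} we obtain existence of the two-sided limit, i.e.\ $\big(\lambda^n(A^+_\varepsilon)+\lambda^n(A^-_\varepsilon)\big)/\varepsilon\to 2\mathcal{H}^{n-1}(\partial E)$.

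Next, up to an $\mathcal{H}^{n-1}$-null set we split $\partial E=\partial^*E\cup(\partial E\cap E^0)\cup(\partial E\cap E^1)$, using $\mathcal{H}^{n-1}(\partial_*E\setminus\partial^*E)=0$, and set $P:=\mathcal{H}^{n-1}(\partial^*E)=\Per(E;\mathbb{R}^n)$, $Z:=\mathcal{H}^{n-1}(\partial E\cap E^0)$, $O:=\mathcal{H}^{n-1}(\partial E\cap E^1)$, so the two-sided content equals $P+Z+O$. The core step is to prove the two lower bounds $\liminf_{\varepsilon\to0_+}\lambda^n(A^+_\varepsilon)/\varepsilon\ge P+2Z$ and $\liminf_{\varepsilon\to0_+}\lambda^n(A^-_\varepsilon)/\varepsilon\ge P+2O$ by a localisation/blow-up argument. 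At $\mathcal{H}^{n-1}$-a.e.\ point of $\partial E$ rectifiability provides an approximate tangent hyperplane $T$ along which $\partial E$ has density one; points at signed distance $\pm t$, $t\le\varepsilon$, from $T$ near such a point lie within $\varepsilon$ of $\partial E\subseteq E$. At $x\in\partial^*E$ the blow-up is a half-space, so exactly one side lies outside $E$ and the outer sausage contributes with local rate $1$ (and the inner with rate $1$); at $x\in\partial E\cap E^0$ both sides have $E$-density zero, so both half-slabs lie in $A^+_\varepsilon$ and the outer rate is $2$, while the inner contributes nothing; symmetrically, at $E^1$-points the inner rate is $2$ and the outer is $0$. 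Turning these local slab inclusions into the stated integral lower bounds $\int_{\partial E}\theta^\pm\,\difm^{n-1}$ is done via a Besicovitch covering (Theorem \ref{bes}) and Fatou's lemma, the everywhere lower bound $\eta(B(x,r))\ge\gamma r^{n-1}$ guaranteeing that the tangent hyperplane is genuinely ``thick'' (two-sided) and that no boundary mass escapes.

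Finally I would combine the three estimates: since $\lim_{\varepsilon\to0_+}\big(\lambda^n(A^+_\varepsilon)+\lambda^n(A^-_\varepsilon)\big)/\varepsilon=2(P+Z+O)$ while $\liminf\lambda^n(A^-_\varepsilon)/\varepsilon\ge P+2O$, we get $\limsup\lambda^n(A^+_\varepsilon)/\varepsilon\le 2(P+Z+O)-(P+2O)=P+2Z$, which with the matching lower bound forces $\lim_{\varepsilon\to0_+}\lambda^n(A^+_\varepsilon)/\varepsilon=P+2Z$. Hence $\mathcal{SM}_{B(0,1)}(E;\mathbb{R}^n)=P+2Z=\Per(E;\mathbb{R}^n)+2\mathcal{H}^{n-1}\big((\partial E)\cap E^0\big)$. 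I expect the main obstacle to be precisely the blow-up step for the $E^0$ part: density zero of $E$ at a boundary point does not by itself yield a clean tangent, so one must couple rectifiability with the density information and, crucially, exploit the \emph{everywhere} lower Ahlfors bound coming from $\eta$ to ensure that the exterior sausage genuinely fills a two-sided slab and that the local multiplicities integrate to the claimed value uniformly, with no concentration of mass as $\varepsilon\to0_+$.
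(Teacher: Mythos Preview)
The paper does not prove this statement at all: Theorem \ref{closed} is quoted verbatim from \cite[Theorem 3.1]{vrv} as a background result, with no argument supplied. There is therefore nothing in the paper to compare your attempt against.

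That said, your outline is the standard route taken in \cite{Villa4} and \cite{vrv}: reduce to the two-sided content of $\partial E$ via \cite[Theorem 2.104]{ambrosio}, split $\partial E$ (up to an $\mathcal{H}^{n-1}$-null set) according to the Lebesgue density of $E$, and prove matching lower bounds for the inner and outer half-sausages by blow-up, so that Lemma \ref{van} (the ``two liminfs whose sum has the right limsup'' trick) forces both limits. Your identification of the delicate point is accurate: at a point of $(\partial E)\cap E^0$ one needs rectifiability of $\partial E$ together with $\Theta_n(E,x)=0$ to conclude that \emph{both} half-slabs over the approximate tangent plane lie asymptotically in the complement of $E$, and the everywhere lower bound $\eta(B(x,r))\ge\gamma r^{n-1}$ is what makes the Besicovitch covering argument uniform. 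One small correction: the bound $\mathcal{H}^{n-1}(\partial E)<\infty$ follows directly from the $\eta$-hypothesis by a standard $5r$-covering (no need for Theorem \ref{bes} there), and you should be explicit that $\lambda^n(\partial E)=0$ (which you use when writing the full neighbourhood as $A^+_\varepsilon\cup A^-_\varepsilon$ up to a null set) is a consequence of $\mathcal{H}^{n-1}(\partial E)<\infty$.
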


This result extends to general convex bodies $C \in \mathcal{C}^n_0$ (see \cite[Theorem~4.3]{Villa2}).

In this work, we are mainly concerned with the case where $C$ is a full-dimensional convex body with $0 \in \textup{int}(C)$ and $E$ is a set of finite perimeter.  
There exists an example of a set of finite perimeter in $\mathbb{R}^3$ for which the isotropic outer Minkowski content (i.e., for $C = B(0,1)$) does not exist, while the anisotropic version exists for every two-dimensional disc (\cite[Ex.~3]{Rataj}).

It is important to note that modifying $E \subseteq \mathbb{R}^n$ on a set of Lebesgue measure zero may alter its (anisotropic) outer Minkowski content.  
For instance, if $E = \mathbb{Q}$ and $\Omega = \mathbb{R}$, the value changes drastically.  
In \cite{chambolle}, an alternative functional is introduced that coincides with $\mathcal{SM}_C(\cdot; \Omega)$ for sets satisfying $E = E^1$.  
Nevertheless, in this paper, we adhere to the standard definition of $\mathcal{SM}_C(\cdot; \Omega)$.

We now define the $C$-anisotropic Minkowski content:

\begin{definition}\label{Minkow}
Let $S \subseteq \mathbb{R}^n$ be measurable and $Q \subseteq \mathbb{R}^n$ with $0 \in Q$.  
We define the \textit{$(\varepsilon, Q)$-anisotropic Minkowski content of $S$ in $\Omega$} by
\[
\mathcal{M}_{\varepsilon, Q}(S; \Omega)
    \coloneq \frac{1}{2\varepsilon}
    \lambda^n\!\Big( \big( (S \cap \Omega) \oplus \varepsilon Q \big) \cap \Omega \Big),
    \quad \varepsilon > 0.
\]
Then
\[
\mathcal{M}_Q(S; \Omega)_* \coloneq \liminf_{\varepsilon \to 0_+} \mathcal{M}_{\varepsilon, Q}(S; \Omega),
\qquad
\mathcal{M}_Q(S; \Omega)^* \coloneq \limsup_{\varepsilon \to 0_+} \mathcal{M}_{\varepsilon, Q}(S; \Omega).
\]
If these two limits coincide, the common value
\[
\mathcal{M}_Q(S; \Omega) \coloneq \mathcal{M}_Q(S; \Omega)_*
\]
is called the \textit{$Q$-anisotropic Minkowski content of $S$ in $\Omega$}.  
Whenever we write $\mathcal{M}_Q(S; \Omega)$, we implicitly assume that the limit exists.
\end{definition}
Let us point out that if $Q=B(0,1)$, the adjective \textit{anisotropic} is omitted (or replaced by \textit{isotropic}), and we write $\mathcal{SM}_{\varepsilon}$ instead of $\mathcal{SM}_{\varepsilon, B(0,1)}$, $\mathcal{SM}$ instead of $\mathcal{SM}_{B(0,1)}$, $\mathcal{M}_{\varepsilon}$ instead of $\mathcal{M}_{\varepsilon, B(0,1)}$, and $\mathcal{M}$ instead of $\mathcal{M}_{B(0,1)}$.    

\begin{definition}
Let $C\in\mathcal{C}^n_0$ and $\Omega\subseteq\mathbb{R}^n$ be open. For a measurable set $E\subseteq\mathbb{R}^n$, we define the \textit{$C$-anisotropic perimeter of $E$ in $\Omega$} by
\[
\textup{Per}_{h_C}(E;\Omega)\coloneq
\begin{cases}
\displaystyle \int_{\Omega\cap \partial^* E} h_C(\nu_E)\, d\mathcal{H}^{n-1}, & \text{if $\Per(E;\Omega)<\infty$},\\
+\infty, & \text{otherwise}.
\end{cases}
\]
\end{definition}

\begin{definition}
Let $\Omega\subseteq\mathbb{R}^n$ be open, $E\subseteq\Omega$ be measurable, and $C\in\mathcal{C}^n_0$. We say that the \textit{$C$-anisotropic outer Minkowski content of $E$ in $\Omega$ exists} if
\[
\mathcal{SM}_C(E;\Omega) = \textup{Per}_{h_C}(E;\Omega).
\]
Similarly, the \textit{$C$-anisotropic Minkowski content of $\partial E$ (or of $\partial^* E$) in $\Omega$ exists} if it coincides with
\[
\frac{1}{2}\big(\textup{Per}_{h_C}(E;\Omega) + \textup{Per}_{h_C}(\Omega\setminus E;\Omega)\big).
\]
\end{definition}

\begin{rem}\label{REMAk}
Suppose $E$ is a set of finite perimeter in an open set $\Omega\subseteq\mathbb{R}^n$, $C, C'\in\mathcal{C}^n_0$, and $S\subseteq\mathbb{R}^n$ is measurable. Then:
\begin{itemize}
    \item The $C$-anisotropic outer Minkowski content of $E$ in $\Omega$ is sometimes called one-sided. Intuitively, we enlarge $E$ only \emph{outwards}, whereas $\partial E$ is enlarged in both directions, \emph{inwards} and \emph{outwards}.
    \item The classical perimeter satisfies $\Per(E;\Omega) = \textup{Per}_{h_{B(0,1)}}(E;\Omega)$.
    \item If $a C\subseteq C'\subseteq b C$ for some positive constants $a$ and $b$, then
    \[
    a\,\textup{Per}_{h_C}(E;\Omega) \le \textup{Per}_{h_{C'}}(E;\Omega) \le b\,\textup{Per}_{h_C}(E;\Omega) \quad \text{(see (\ref{hurich}))},
    \]
    \[
    a\,\mathcal{M}_{a\varepsilon,C}(S;\Omega) \le \mathcal{M}_{\varepsilon,C'}(S;\Omega) \le b\,\mathcal{M}_{b\varepsilon,C}(S;\Omega), \quad \varepsilon>0,
    \]
    \[
    a\,\mathcal{SM}_{a\varepsilon,C}(E;\Omega) \le \mathcal{SM}_{\varepsilon,C'}(E;\Omega) \le b\,\mathcal{SM}_{b\varepsilon,C}(E;\Omega), \quad \varepsilon>0.
    \]
    Hence, these three functionals behave similarly. In particular, $\mathcal{SM}_C(E;\Omega)=0$ implies $\mathcal{SM}_{C'}(E;\Omega)=0$, and similarly $\mathcal{M}_C(S;\Omega)=0$ implies $\mathcal{M}_{C'}(S;\Omega)=0$.
    \item For $\varepsilon>0$ (see the discussion on page 5 in \cite{chambolle}):
    \begin{equation}\label{min4}
        \min_{\{F\subseteq \Omega : \lambda^n(E\Delta F)=0\}} \mathcal{SM}_{\varepsilon,C}(F;\Omega) = \mathcal{SM}_{\varepsilon,C}(E^1;\Omega) = \mathcal{SM}_{\varepsilon,C}(\Omega\setminus E^0;\Omega).
    \end{equation}
    \item For $\varepsilon>0$, the functionals $2\mathcal{M}_{\varepsilon,C}(\cdot;\Omega)$ and $\mathcal{SM}_{\varepsilon,C}(\cdot;\Omega)$ coincide on $\lambda^n$-negligible sets.
    \item For all $\varepsilon>0$, $\mathcal{M}_{\varepsilon,C}(S;\Omega) \ge \lambda^n(S\cap\Omega)/(2\varepsilon)$, and therefore if $\mathcal{M}_C(S;\Omega)^*<\infty$, then $\lambda^n(S\cap\Omega)=0$.
        \item For all $\varepsilon>0$,
    \[
    \mathcal{SM}_{\varepsilon, C}(E; \Omega)
    =\frac{1}{\varepsilon}\lambda^n\Big(\big((E\oplus \varepsilon C)\cap\Omega\big)\setminus E\Big)
    \geq
    \frac{1}{\varepsilon}\lambda^n\Big(\big(\overline{E}\cap\Omega\big)\setminus E\Big),
    \]
    because $\overline{E}\subseteq E\oplus\varepsilon C$. Consequently, if $\mathcal{SM}_{C}(E; \Omega)^*<\infty$, then
    \[
    \lambda^n\big((\overline{E}\cap\Omega)\setminus E\big)=0.
    \]
\end{itemize}
\end{rem}

\section{Basic Properties of Anisotropic (Outer) Minkowski Content}
\label{secFurtherResults}

In this section, we summarize basic properties of $\mathcal{M}_C(\cdot; \Omega)$ and $\mathcal{SM}_C(\cdot; \Omega)$, along with necessary conditions for their existence.

We start with the following lemma, which is crucial for the proof of our main result (cf. \cite[Lemma 3.3]{Villa2}).

\begin{lemma}\label{crucial}
    If $S$ is a compact set contained in a Lipschitz $(n-1)$-graph, then
    \[
        \mathcal{M}_{C}(S; \Omega)=\frac{1}{2}\int\limits_{S} \big(h_C(\nu_S)+h_C(-\nu_S)\big)\, \difm^{n-1}
    \]
    for any open set $\Omega$ with $S\subseteq\Omega\subseteq\mathbb{R}^n$, where $\nu_S\colon S\to\mathbb{S}^{n-1}$ is a unit normal to $S$ (see \cite[p. 3]{Villa2}). 
\end{lemma}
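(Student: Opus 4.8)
The plan is to strip away $\Omega$, rewrite the target as a Lebesgue integral over the base of the graph, and then compute the volume of the Minkowski sum by flattening the graph with a measure-preserving shear.

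\smallskip

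\emph{Reduction.} Since $S$ is compact and $S\subseteq\Omega$ is open, we have $\dist(S,\mathbb{R}^n\setminus\Omega)>0$ (or $\Omega=\mathbb{R}^n$); as $C$ is bounded, $(S\cap\Omega)\oplus\varepsilon C=S\oplus\varepsilon C\subseteq\Omega$ for all small $\varepsilon>0$, so that $\mathcal{M}_{\varepsilon,C}(S;\Omega)=\tfrac{1}{2\varepsilon}\lambda^n(S\oplus\varepsilon C)$ and the limit does not depend on $\Omega$. I would choose coordinates so that $S$ lies in the graph of an $L$-Lipschitz map $g\colon\mathbb{R}^{n-1}\to\mathbb{R}$, write $\pi$ for the projection forgetting the last coordinate, and set $K\coloneq\pi(S)$ (compact, since $\pi$ is injective on the graph). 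At every point where $g$ is differentiable (so $\lambda^{n-1}$-a.e.\ on $K$, by Rademacher) one has $\nu_S=(-\nabla g,1)/\sqrt{1+|\nabla g|^2}$; positive homogeneity of $h_C$ together with the area formula then turns the target into $\tfrac12\int_K w(\nabla g)\,\mathrm{d}\lambda^{n-1}$, where $w(p)\coloneq h_C((-p,1))+h_C((p,-1))$. It thus suffices to prove that $\tfrac{1}{2\varepsilon}\lambda^n(S\oplus\varepsilon C)\to\tfrac12\int_K w(\nabla g)\,\mathrm{d}\lambda^{n-1}$.

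\smallskip

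\emph{Flat case and the shear.} I would first treat a compact piece $T$ of a hyperplane with unit normal $\nu$. Slicing $T\oplus\varepsilon C$ in the direction $\nu$ and rescaling gives $\tfrac{1}{2\varepsilon}\lambda^n(T\oplus\varepsilon C)=\tfrac12\int_{-h_C(-\nu)}^{h_C(\nu)}\lambda^{n-1}\big(\pi_\nu(T)\oplus\varepsilon C_\tau\big)\,\mathrm{d}\tau$, where $\pi_\nu(T)$ and $C_\tau$ denote the projection of $T$ and the level-$\tau$ slice of $C$; each integrand lies between $\lambda^{n-1}(\pi_\nu(T))$ (a mere translate) and the measure of an outer parallel set of $\pi_\nu(T)$, which decreases to $\lambda^{n-1}(\pi_\nu(T))$, so dominated convergence yields $\tfrac12(h_C(\nu)+h_C(-\nu))\mathcal{H}^{n-1}(T)$. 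The key device for the curved case is the shear $\Psi(y,t)\coloneq(y,t-g(y))$, which preserves $\lambda^n$ and every vertical line while flattening the graph. Locally replacing $g$ by its affine tangent map $a_j(y)=g(y_j)+p_j\cdot(y-y_j)$ makes $\Psi$ linear of determinant one, carrying $C$ to the sheared body $C_j\coloneq M_jC$ with $M_jc=(c',c_n-p_j\cdot c')$; then $h_{C_j}(e_n)=h_C((-p_j,1))$ and $h_{C_j}(-e_n)=h_C((p_j,-1))$, so $h_{C_j}(e_n)+h_{C_j}(-e_n)=w(p_j)$, and the flat case applied to $(Q_j\times\{0\})\oplus\varepsilon C_j$ returns exactly the density $\tfrac12 w(p_j)$.

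\smallskip

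\emph{Localization and the main obstacle.} To replace the tangent map by $g$ itself I would invoke a Lusin-type $C^1$ approximation: given $\eta>0$ there is $g_\eta\in C^1$ agreeing with $g$, together with its gradient, off a set of measure $<\eta$; on the good compact part $G$ of $K$ the graph of $g$ coincides with a $C^1$ graph, whose base I cover by small cubes $Q_j$ on which $|\nabla g_\eta-p_j|\le\eta$ holds throughout. After applying $\Psi_j$ the corresponding graph piece becomes a graph of Lipschitz constant $\le\eta$, and slicing as in the flat case---now the height oscillates by only $O(\eta\varepsilon)$ across each $O(\varepsilon)$-wide fibre---shows its vertical slices have length $\varepsilon\,w(p_j)(1+O(\eta))$, so the patch contributes $\tfrac12 w(p_j)\lambda^{n-1}(Q_j)(1+O(\eta))$. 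I would then assemble the upper bound from subadditivity of $\lambda^n$ over the cover and the lower bound from the \emph{disjoint} vertical cylinders $\pi^{-1}(Q_j)$ (preserved by $\Psi_j$), each losing only an $O(\varepsilon)$ boundary layer, while the bad set contributes at most $\mathrm{const}(L,C)\,\eta$ by the crude tube estimate coming from $C\subseteq B(0,R)$, projection, and the Lipschitz bound on the oscillation of $g$. Letting $\varepsilon\to0_+$ and then $\eta\to0_+$ gives the claim. The genuine difficulty is precisely the uniform control of the non-flatness: a naive sandwiching of the surface inside a solid slab of fixed thickness $\delta$ is useless, since such a slab has anisotropic Minkowski content of order $\delta/\varepsilon\to\infty$. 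The shear is what resolves this---it removes the bulk tilt so that only a gradient of size $\eta$ survives---while the gap between Lipschitz and $C^1$ regularity is bridged by the Lusin approximation; checking the legitimacy of the iterated limit and of the disjoint-cylinder lower bound is the remaining bookkeeping.
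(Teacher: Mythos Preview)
The paper does not prove this lemma; it simply records it as \cite[Lemma~3.3]{Villa2} and uses it as a black box throughout. There is therefore no in-paper argument to compare against, and your proposal amounts to supplying an independent proof.

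Your strategy is sound and essentially complete. The reduction to $\Omega=\mathbb{R}^n$ is immediate from compactness of $S$; the area-formula rewriting of the target as $\tfrac12\int_K w(\nabla g)\,\mathrm{d}\lambda^{n-1}$ is routine. The linear shear $M_j$ is exactly the right device: it has determinant one, commutes with the Minkowski sum, and turns the tangent plane into a horizontal hyperplane while sending $C$ to $C_j=M_jC$ with $h_{C_j}(\pm e_n)=h_C\big((\mp p_j,\pm1)\big)$, so that $h_{C_j}(e_n)+h_{C_j}(-e_n)=w(p_j)$. Your vertical-slice computation is correct as well: over a base point $y$ the slice of $\tilde S_j\oplus\varepsilon C_j$ is the continuous image of the convex set $C_j$ under $c\mapsto\tilde g(y-\varepsilon c')+\varepsilon c_n$, hence an interval, and its length is $\varepsilon\,w(p_j)+O(\eta\,\varepsilon\,\diam C_j)$ with $\diam C_j$ controlled uniformly by $L$ and $\diam C$ (since $|p_j|\le L$). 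The $C^1$-Lusin step together with the crude tube estimate handles the bad set, and the boundary layers arising when you localise to the disjoint cylinders $\pi^{-1}(Q_j)$ are of order $\varepsilon$ times the total $(n-2)$-measure of the partition boundaries, which can be made negligible. With these bookkeeping points checked the argument goes through.
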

The next proposition provides lower bounds for the lower $C$-anisotropic Minkowski content and the lower $C$-anisotropic outer Minkowski content.
\begin{proposition}[Lower bounds]\label{eq:4}
Let $E$ be a set of finite perimeter in an open set $\Omega\subseteq\mathbb{R}^n$, and let $C\in\mathcal{C}^n_0$. Then
\begin{equation}
    \mathcal{SM}_{C}(E; \Omega)_*=\liminf_{\varepsilon\to 0_+} \mathcal{SM}_{\varepsilon,C}(E; \Omega) \geq \textup{Per}_{h_C}(E; \Omega),
\end{equation}
and
\begin{equation}
\begin{split}
    \mathcal{M}_{C}(\partial E; \Omega)_* &= \liminf_{\varepsilon\to 0_+} \mathcal{M}_{\varepsilon,C}(\partial E; \Omega) \geq \mathcal{M}_{C}(\partial^* E; \Omega)_* = \liminf_{\varepsilon\to 0_+} \mathcal{M}_{\varepsilon,C}(\partial^* E; \Omega)\\
    &\geq \frac{1}{2}\big(\textup{Per}_{h_C}(E; \Omega)+\textup{Per}_{h_C}(\Omega\setminus E; \Omega)\big).
\end{split}
\end{equation} 
\end{proposition}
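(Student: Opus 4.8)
The plan is to treat the two displayed inequalities separately, building both on the decomposition of the reduced boundary furnished by Remark \ref{R1} together with the exact two-sided formula of Lemma \ref{crucial}. I would first dispose of the trivial estimate $\mathcal{M}_C(\partial E;\Omega)_*\ge\mathcal{M}_C(\partial^* E;\Omega)_*$: since $\partial^* E\subseteq\partial E$ by Remark \ref{remarko}, we have $(\partial^* E\cap\Omega)\oplus\varepsilon C\subseteq(\partial E\cap\Omega)\oplus\varepsilon C$ for every $\varepsilon>0$, hence $\mathcal{M}_{\varepsilon,C}(\partial^* E;\Omega)\le\mathcal{M}_{\varepsilon,C}(\partial E;\Omega)$, and passing to the $\liminf$ gives the claim. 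I would also record at the outset, via Remark \ref{remarko} (namely $\nu_{\Omega\setminus E}=-\nu_E$ and $\partial^*(\Omega\setminus E)=\partial^* E$ up to an $\mathcal{H}^{n-1}$-null set), that the target on the right equals $\tfrac12\int_{\Omega\cap\partial^* E}(h_C(\nu_E)+h_C(-\nu_E))\difm^{n-1}$.

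For the substantive bound $\mathcal{M}_C(\partial^* E;\Omega)_*\ge\tfrac12\int_{\Omega\cap\partial^* E}(h_C(\nu_E)+h_C(-\nu_E))\difm^{n-1}$, I would fix a finite family $K_1,\dots,K_m$ of pairwise disjoint compact subsets of $\Omega\cap\partial^* E$, each contained in a Lipschitz $(n-1)$-graph; such a family exists and exhausts $\Omega\cap\partial^* E$ up to an $\mathcal{H}^{n-1}$-null set by Remark \ref{R1}, using $\mathcal{H}^{n-1}(\Omega\cap\partial^* E)=\Per(E;\Omega)<\infty$. Being compact, pairwise disjoint and contained in the open set $\Omega$, they have positive mutual separation and positive distance to $\partial\Omega$, so for all sufficiently small $\varepsilon$ the dilates $K_i\oplus\varepsilon C$ are pairwise disjoint and contained in $\Omega$. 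Consequently $\mathcal{M}_{\varepsilon,C}(\partial^* E;\Omega)\ge\sum_{i=1}^m\mathcal{M}_{\varepsilon,C}(K_i;\Omega)$, and the superadditivity of $\liminf$ over a finite sum together with Lemma \ref{crucial} yields $\mathcal{M}_C(\partial^* E;\Omega)_*\ge\sum_i\tfrac12\int_{K_i}(h_C(\nu_{K_i})+h_C(-\nu_{K_i}))\difm^{n-1}$. Since $K_i\subseteq\partial^* E$ lies in a Lipschitz graph, its approximate normal agrees with $\pm\nu_E$ $\mathcal{H}^{n-1}$-a.e., and $\nu\mapsto h_C(\nu)+h_C(-\nu)$ is even, so each summand equals $\tfrac12\int_{K_i}(h_C(\nu_E)+h_C(-\nu_E))\difm^{n-1}$; letting $m\to\infty$ and using that the integrand is bounded with $\mathcal{H}^{n-1}(\Omega\cap\partial^* E)<\infty$ recovers the full integral.

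For the outer bound $\mathcal{SM}_C(E;\Omega)_*\ge\Per_{h_C}(E;\Omega)=\int_{\Omega\cap\partial^* E}h_C(\nu_E)\difm^{n-1}$ I would run a one-sided version of the same localization, exploiting that the collar $((E\cap\Omega)\oplus\varepsilon C)\cap(\Omega\setminus E)$ concentrates near $\partial E$. Choosing pairwise disjoint open neighborhoods $U_i$ of the pieces $K_i$ above, I would bound the collar volume below by $\sum_i\lambda^n(A_\varepsilon\cap U_i)$ and argue that for small $\varepsilon$ the part of $(E\cap\Omega)\oplus\varepsilon C$ reaching $U_i\setminus E$ is produced only by points of $E$ lying within $\varepsilon\diam(C)$ of $K_i$. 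The model computation is that for a half-space $H=\{y:y\cdot\nu\le0\}$ one has $(H\oplus\varepsilon C)\setminus H=\{z:0<z\cdot\nu\le\varepsilon h_C(\nu)\}$, a slab of width $\varepsilon h_C(\nu)$ and hence of density $h_C(\nu)$ per unit boundary area. The hard part will be upgrading this model estimate to a rigorous local lower bound $\liminf_{\varepsilon\to0_+}\tfrac1\varepsilon\lambda^n(A_\varepsilon\cap U_i)\ge(1-\delta)\int_{K_i}h_C(\nu_E)\difm^{n-1}$: in contrast to the two-sided Lemma \ref{crucial}, which is already available, here one must verify that the dilation produces precisely the \emph{outward} slab and must control the varying normal direction along the Lipschitz graph uniformly.

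I would resolve this through a blow-up at $\mathcal{H}^{n-1}$-a.e. $x\in K_i$, where $\tfrac{E-x}{\varepsilon}\to\{y:y\cdot\nu_E(x)\le0\}$ in $\mathcal{L}^1_{\mathrm{loc}}$, reducing the local collar volume to the half-space slab up to an error that is $o(1)$ as the scale shrinks; to pass from pointwise densities to the integral I would cover $K_i$ by balls on which the half-space approximation is good and invoke the Besicovitch covering theorem (Theorem \ref{bes}) to keep overlaps bounded by a dimensional constant $N$, together with the density normalization of Theorem \ref{theorem} so that the selected balls carry essentially all of $\mathcal{H}^{n-1}\big|_{K_i}$. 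The genuine technical difficulty is the coupling of the two scales---the dilation parameter $\varepsilon$ and the covering radius---which forces a careful ordering of limits, namely first fixing the covering for a given tolerance $\delta$ and only then letting $\varepsilon\to0_+$. Granting the local estimate, summing over $i$, applying superadditivity of $\liminf$, shrinking the neighborhoods, exhausting $\Omega\cap\partial^* E$, and finally sending $\delta\to0_+$ delivers $\mathcal{SM}_C(E;\Omega)_*\ge\int_{\Omega\cap\partial^* E}h_C(\nu_E)\difm^{n-1}$, which completes the proof.
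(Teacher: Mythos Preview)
Your treatment of the two-sided bound \(\mathcal{M}_C(\partial^* E;\Omega)_*\ge\tfrac12(\Per_{h_C}(E;\Omega)+\Per_{h_C}(\Omega\setminus E;\Omega))\) is correct and essentially identical to the paper's: both decompose \(\Omega\cap\partial^* E\) into disjoint compact pieces in Lipschitz graphs via Remark~\ref{R1}, apply Lemma~\ref{crucial} to each piece, use disjointness and compactness to separate the \(\varepsilon\)-dilates for small \(\varepsilon\), and pass to the limit in the number of pieces. Your observation that the evenness of \(\nu\mapsto h_C(\nu)+h_C(-\nu)\) resolves the sign ambiguity between \(\nu_{K_i}\) and \(\nu_E\) is a nice clarification that the paper leaves implicit.

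Where you diverge is the one-sided bound \(\mathcal{SM}_C(E;\Omega)_*\ge\Per_{h_C}(E;\Omega)\). The paper does not prove this at all: it simply invokes \cite[Theorem~3.1]{chambolle} and moves on. You instead sketch a direct argument via localization to the pieces \(K_i\), blow-up to the half-space model, and a Besicovitch covering to integrate the pointwise densities. That route is in principle sound and is close in spirit to how such lower bounds are established in the literature, but it is substantially more work than the paper undertakes here, and---as you yourself flag---the coupling of the dilation scale \(\varepsilon\) with the covering radius is a genuine technical point that your sketch does not resolve in detail. In short: your proof of the second inequality matches the paper, while for the first you are reproving a cited result; there is no error, but you are doing more than the proposition requires given the available references.
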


\begin{proof}
The first part follows, for instance, from \cite[Theorem 3.1]{chambolle}.

For the second part, since $\partial^* E \subseteq \partial E$, we immediately have
\begin{equation}\label{eq:5}
    \mathcal{M}_{C}(\partial E; \Omega)_* \geq \mathcal{M}_{C}(\partial^* E; \Omega)_*.
\end{equation}
The set $\partial^* E \cap \Omega$ is countably $\mathcal{H}^{n-1}$-rectifiable with finite $\mathcal{H}^{n-1}$-measure. Therefore, there exists a sequence $\{S_k\}_{k=1}^{\infty}$ of pairwise disjoint compact subsets of $\partial^* E \cap \Omega$ that covers $\partial^* E \cap \Omega$ up to an $\mathcal{H}^{n-1}$-negligible set, with each $S_k$ contained in a graph of a Lipschitz function (see Remark \ref{R1}).

Fix an arbitrary $L\in\mathbb{N}$. Since the compact sets $S_1,\dots,S_L$ are pairwise disjoint, their mutual distances are positive. Therefore, for all sufficiently small $r>0$, the sets $S_k\oplus rC$, $k=1, \dots, L$, are pairwise disjoint. Applying Lemma \ref{crucial} yields
\begin{equation}\label{eq:6}
\begin{split}
    \sum_{k=1}^{L} \frac{1}{2}\int\limits_{S_k} \big(h_C(\nu_E)+h_C(-\nu_E)\big) \difm^{n-1} 
    &= \sum_{k=1}^L \mathcal{M}_{C}(S_k; \Omega)\\
    &= \sum_{k=1}^L \lim_{r\to0_+} \frac{1}{2r} \lambda^n \big((S_k \oplus r C) \cap \Omega\big)\\
    &= \lim_{r\to0_+} \frac{1}{2r} \lambda^n \Big(\Big(\bigcup_{k=1}^L S_k \oplus r C \Big) \cap \Omega \Big)\\
    &\leq \liminf_{r\to0_+} \frac{1}{2r} \lambda^n \Big(\Big((\partial^* E \cap \Omega) \oplus r C \Big) \cap \Omega \Big)\\
    &= \mathcal{M}_{C}(\partial^* E; \Omega)_*.
\end{split}
\end{equation}
Since $L$ was arbitrary, we deduce
\[
    \frac{1}{2} \big(\textup{Per}_{h_C}(E; \Omega)+\textup{Per}_{h_C}(\Omega\setminus E; \Omega)\big) \leq \mathcal{M}_{C}(\partial^* E; \Omega)_*,
\]
which together with (\ref{eq:5}) gives the desired inequality.
\end{proof}

\begin{rem}
The inequalities in Proposition~\ref{eq:4} may be strict even if the corresponding limits exist. For instance, in \cite[Example~1]{Villa4}, the authors present a compact set $E$ such that
\[
\Per(E; \mathbb{R}^n)<\mathcal{H}^{n-1}(\partial E)<\mathcal{SM}(E; \mathbb{R}^n).
\]
\end{rem}

\begin{rem}\label{EWQ}
A simple consequence of Proposition \ref{eq:4} is the following.

If
\[
\mathcal{SM}_C(E; \Omega)=\textup{Per}_{h_C}(E; \Omega),
\]
then, since $\lambda^n(E^1\Delta E)=0$, we also have
\[
\textup{Per}_{h_C}(E^1; \Omega)=\textup{Per}_{h_C}(E; \Omega),
\]
and therefore
\[
\textup{Per}_{h_C}(E^1; \Omega)\leq \mathcal{SM}_{C}(E^1; \Omega)_*
\leq \mathcal{SM}_{C}(E^1; \Omega)^*
\leq \mathcal{SM}_C(E; \Omega)
=\textup{Per}_{h_C}(E^1; \Omega).
\]
Hence
\[
\mathcal{SM}_{C}(E^1; \Omega)=\textup{Per}_{h_C}(E^1; \Omega)=\textup{Per}_{h_C}(E; \Omega).
\]

Likewise, if
\[
\mathcal{M}_C(\partial E; \Omega)=\frac{1}{2}\big(\textup{Per}_{h_C}(E; \Omega)+\textup{Per}_{h_C}(\Omega\setminus E; \Omega)\big),
\]
then
\begin{equation*}
\begin{split}
\frac{1}{2}\big(\textup{Per}_{h_C}(E; \Omega)+\textup{Per}_{h_C}(\Omega\setminus E; \Omega)\big)
&\leq \mathcal{M}_C(\partial^* E; \Omega)_*\\
&\leq \mathcal{M}_C(\partial^* E; \Omega)^*\\
&\leq \mathcal{M}_C(\partial E; \Omega)\\
&= \frac{1}{2}\big(\textup{Per}_{h_C}(E; \Omega)+\textup{Per}_{h_C}(\Omega\setminus E; \Omega)\big).
\end{split}
\end{equation*}
Hence
\[
\mathcal{M}_C(\partial^* E; \Omega)
=\frac{1}{2}\big(\textup{Per}_{h_C}(E; \Omega)+\textup{Per}_{h_C}(\Omega\setminus E; \Omega)\big).
\]
\end{rem}

\medskip

We recall the following lemma (cf. \cite[Lemma 1]{Villa4}):

\begin{lemma}\label{van}
Let $\{a_{\varepsilon}\}_{\varepsilon>0}$ and $\{b_{\varepsilon}\}_{\varepsilon>0}$ be families of non-negative real numbers. If there exist $a, b\in\mathbb{R}$ such that
\[
    \limsup_{\varepsilon\to0_+}(a_{\varepsilon}+b_{\varepsilon})\leq a+b, \quad 
    \liminf_{\varepsilon\to0_+}a_{\varepsilon}\geq a, \quad 
    \text{and} \quad \liminf_{\varepsilon\to0_+}b_{\varepsilon}\geq b,
\]
then
\[
    \lim_{\varepsilon\to0_+} a_{\varepsilon}=a \quad \text{and} \quad \lim_{\varepsilon\to0_+} b_{\varepsilon}=b.
\]
\end{lemma}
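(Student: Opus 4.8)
The plan is to deduce the two one-sided upper estimates $\limsup_{\varepsilon\to0_+}a_\varepsilon\le a$ and $\limsup_{\varepsilon\to0_+}b_\varepsilon\le b$ from the hypotheses, and then to sandwich each family between matching lower and upper limits. First I would secure finiteness of everything in sight: since $a_\varepsilon,b_\varepsilon\ge0$, we have $0\le a_\varepsilon\le a_\varepsilon+b_\varepsilon$ for every $\varepsilon>0$, so $\limsup_{\varepsilon\to0_+}a_\varepsilon\le\limsup_{\varepsilon\to0_+}(a_\varepsilon+b_\varepsilon)\le a+b<\infty$, and symmetrically for $\{b_\varepsilon\}$. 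In particular none of the four expressions $\limsup a_\varepsilon$, $\liminf a_\varepsilon$, $\limsup b_\varepsilon$, $\liminf b_\varepsilon$ equals $\pm\infty$, so the ordinary arithmetic of upper and lower limits applies without indeterminate-form caveats.

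The key step is the superadditive estimate for upper limits. Writing $a_\varepsilon=(a_\varepsilon+b_\varepsilon)+(-b_\varepsilon)$ and combining the subadditivity $\limsup(x+y)\le\limsup x+\limsup y$ with the identity $\limsup_{\varepsilon\to0_+}(-b_\varepsilon)=-\liminf_{\varepsilon\to0_+}b_\varepsilon$, I obtain
\[
\limsup_{\varepsilon\to0_+}a_\varepsilon+\liminf_{\varepsilon\to0_+}b_\varepsilon\;\le\;\limsup_{\varepsilon\to0_+}(a_\varepsilon+b_\varepsilon)\;\le\;a+b.
\]
Because the hypothesis gives $\liminf_{\varepsilon\to0_+}b_\varepsilon\ge b$, subtracting it from both sides yields $\limsup_{\varepsilon\to0_+}a_\varepsilon\le a$. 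Exchanging the roles of the two families in the identical computation gives $\limsup_{\varepsilon\to0_+}b_\varepsilon\le b$.

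Finally I would combine these upper bounds with the assumed lower bounds on the $\liminf$'s. For $\{a_\varepsilon\}$ this produces the chain $a\le\liminf_{\varepsilon\to0_+}a_\varepsilon\le\limsup_{\varepsilon\to0_+}a_\varepsilon\le a$, forcing both one-sided limits to equal $a$ and hence $\lim_{\varepsilon\to0_+}a_\varepsilon=a$; the same reasoning gives $\lim_{\varepsilon\to0_+}b_\varepsilon=b$. There is no genuine obstacle in this argument: the only point demanding care is to invoke the correct, \emph{superadditive} direction of the limsup estimate, namely $\limsup(a_\varepsilon+b_\varepsilon)\ge\limsup a_\varepsilon+\liminf b_\varepsilon$, rather than the more familiar subadditive bound, and to first establish finiteness via non-negativity so that this inequality is legitimate.
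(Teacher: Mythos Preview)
Your proof is correct. The paper does not actually supply a proof of this lemma; it only states it with a reference to \cite[Lemma 1]{Villa4}, so there is no in-paper argument to compare your approach against. Your use of the superadditive estimate $\limsup a_\varepsilon+\liminf b_\varepsilon\le\limsup(a_\varepsilon+b_\varepsilon)$, together with the preliminary finiteness check via non-negativity, is the standard and cleanest way to establish the result.
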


\medskip

\begin{definition}[\cite{chambolle}]
Let $\Omega\subseteq\mathbb{R}^n$ be open. For $\varepsilon>0$ and measurable $E\subseteq\Omega$, define
\[
    \mathscr{M}_{\varepsilon, C}(E; \Omega) \coloneqq \frac{1}{2} \Big(\mathcal{SM}_{\varepsilon, C}(E; \Omega) + \mathcal{SM}_{\varepsilon, C}(\Omega\setminus E; \Omega)\Big).
\]
Moreover, set
\[
    \mathscr{M}_{C}(E; \Omega)_* \coloneqq \liminf_{\varepsilon\to 0_+} \mathscr{M}_{\varepsilon, C}(E; \Omega), \quad
    \mathscr{M}_{C}(E; \Omega)^* \coloneqq \limsup_{\varepsilon\to 0_+} \mathscr{M}_{\varepsilon, C}(E; \Omega).
\]
If $\mathscr{M}_{C}(E; \Omega)^* = \mathscr{M}_{C}(E; \Omega)_*$, define
\[
    \mathscr{M}_{C}(E; \Omega) \coloneqq \mathscr{M}_{C}(E; \Omega)^*.
\]
We say that $\mathscr{M}_{C}(E; \Omega)$ \emph{exists} if
\[
    \mathscr{M}_{C}(E; \Omega) = \frac{1}{2}\Big(\textup{Per}_{h_C}(E; \Omega) + \textup{Per}_{h_C}(\Omega \setminus E; \Omega)\Big).
\]
For $C=B(0,1)$, we omit the subscript $C$.
\end{definition}

The functional $E\mapsto\mathscr{M}_{\varepsilon, C}(E; \Omega)$ averages the two one-sided outer expansions $E\mapsto\mathcal{SM}_{\varepsilon, C}(E; \Omega)$ and $E\mapsto\mathcal{SM}_{\varepsilon, C}(\Omega\setminus E; \Omega).$

\begin{definition}
For an open set $\Omega\subseteq \mathbb{R}^n$ and $\varepsilon>0$, define the functional
\[
    \mathfrak{M}_{\varepsilon, C}(S; \Omega) \coloneqq \frac{1}{2\varepsilon} \lambda^n \big((S \oplus \varepsilon C) \cap \Omega\big),
\]
where $S\subseteq\mathbb{R}^n$ is measurable. Its lower and upper limits are
\[
    \mathfrak{M}_{C}(S; \Omega)_* \coloneqq \liminf_{\varepsilon\to0_+} \mathfrak{M}_{\varepsilon, C}(S; \Omega), \quad
    \mathfrak{M}_{C}(S; \Omega)^* \coloneqq \limsup_{\varepsilon\to0_+} \mathfrak{M}_{\varepsilon, C}(S; \Omega).
\]
If $\mathfrak{M}_{C}(S; \Omega)_* = \mathfrak{M}_{C}(S; \Omega)^*$, we set
\[
    \mathfrak{M}_{C}(S; \Omega) \coloneqq \mathfrak{M}_{C}(S; \Omega)^*.
\]
If $E$ is a set of finite perimeter in $\Omega$, we say that $\mathfrak{M}_{C}(\partial E; \Omega)$  (or $\mathfrak{M}_{C}(\partial^* E; \Omega)$) \emph{exists} if
it coincides with \[ \frac{1}{2}\Big(\textup{Per}_{h_C}(E; \Omega) + \textup{Per}_{h_C}(\Omega \setminus E; \Omega)\Big).
\]
For $C=B(0,1)$, we omit the subscript $C$.
\end{definition}

The key difference between $\mathfrak{M}_{\varepsilon, C}(S; \Omega)$ and $\mathcal{M}_{\varepsilon, C}(S; \Omega)$ is that, in the definition of the former, the set $S$ is \emph{not} intersected with $\Omega$.

\begin{rem} 
Let $C\in\mathcal{C}^n_0$. 
\begin{itemize}
    \item For any $\lambda^n$-measurable subset $E\subseteq\mathbb{R}^n$, we always have the equality (see \cite[eq. (10)]{Rataj})
    \begin{align*}
        \lambda^n(E\oplus C)=\lambda^n\big(E\oplus \inter(C)\big),
    \end{align*}
    from which it follows that
    \begin{align}\label{kider}
        \lambda^n\big((E\oplus C)\cap\Omega\big)=\lambda^n\Big(\big(E\oplus \inter(C)\big)\cap\Omega\Big) \quad \text{for any measurable $\Omega\subseteq\mathbb{R}^n$.}
    \end{align}
    
    \item Let $\Omega\subseteq\mathbb{R}^n$ be open, $E\subseteq\mathbb{R}^n$ be nonempty and measurable, and $\varepsilon>0$. For any $x\in\Omega$, we have $\varepsilon>\textup{dist}_C(x, E)=\inf_{y\in E}h_C^{\circ}(x-y)$ if and only if there exists $y\in E$ such that $\varepsilon>h_C^{\circ}(x-y)$, which is equivalent to $(x-y)\in\varepsilon \inter(C)$, and this holds if and only if $x\in \big(E\oplus\varepsilon \inter(C)\big)\cap\Omega$. Hence,
    \begin{equation*}
        \big(E\oplus\varepsilon \inter(C)\big)\cap\Omega=\{x\in\Omega:\textup{dist}_C(x, E)<\varepsilon\},
    \end{equation*}
    and thus
    \begin{equation*}
        \lambda^n\Big(\big(E\oplus\varepsilon \inter(C)\big)\cap\Omega\Big)=\lambda^n\big(\{x\in\Omega:\textup{dist}_C(x, E)<\varepsilon\}\big).
    \end{equation*}
    Using (\ref{kider}) and the fact that multiplication by $\varepsilon$ is a homeomorphism, we deduce
    \begin{equation}
        \lambda^n\big((E\oplus\varepsilon C)\cap\Omega\big)=\lambda^n\big(\{x\in\Omega:\textup{dist}_C(x, E)<\varepsilon\}\big),
    \end{equation}
    and consequently, if $\partial E\neq\emptyset$
    \begin{align}\label{eq:54}
        \mathfrak{M}_{\varepsilon, C}(\partial E; \Omega)=\frac{1}{2\varepsilon}\lambda^n\big(\{x\in\Omega:\textup{dist}_C(x, \partial E)<\varepsilon\}\big),
    \end{align}
    while, if $E\cap\Omega\neq\emptyset$,
    \begin{align*}
        \mathcal{M}_{\varepsilon, C}(E; \Omega)=\frac{1}{2\varepsilon}\lambda^n\big(\{x\in\Omega:\textup{dist}_C(x, E\cap\Omega)<\varepsilon\}\big).
    \end{align*}

    Similarly, for $E\subseteq\Omega$,
    \begin{equation*}
        \big(E\oplus\varepsilon \inter(C)\big)\cap(\Omega\setminus E)=\{x\in\Omega\setminus E:\textup{dist}_C(x, E)<\varepsilon\},
    \end{equation*}
    and
    \begin{equation}\label{jelp}
        \lambda^n\big((E\oplus\varepsilon C)\cap(\Omega\setminus E)\big)=\lambda^n\big(\{x\in\Omega:\textup{dist}_C(x, E)\in(0, \varepsilon)\}\big)+\lambda^n\big((\overline{E}\setminus E)\cap\Omega\big),
    \end{equation}
    hence
    \begin{equation}\label{eq:9}
        \mathcal{SM}_{\varepsilon, C}(E; \Omega)=\frac{1}{\varepsilon}\lambda^n\big(\{x\in\Omega:\dist_C(x, E)\in(0, \varepsilon)\}\big)+\frac{1}{\varepsilon}\lambda^n\big((\overline{E}\setminus E)\cap\Omega\big).
    \end{equation} 

    \item Let $\Omega\subseteq\mathbb{R}^n$ be open and $E\subseteq\mathbb{R}^n$ be nonempty and measurable. For $\delta\in(0, 1)$, we have
    \begin{equation*}
        \begin{split}
    \{x\in\Omega:\textup{dist}_C(x, E)<\varepsilon(1-\delta)\}&\subseteq\{x\in\Omega:\textup{dist}_C(x, E)\leq\varepsilon\}\\&\subseteq\{x\in\Omega:\textup{dist}_C(x, E)<\varepsilon(1+\delta)\},        
        \end{split}
    \end{equation*}
    from which it follows that
    \[
    \lim_{\varepsilon\to0_+}\frac{1}{2\varepsilon}\lambda^n\big(\{x\in\Omega:\textup{dist}_C(x, E)<\varepsilon\}\big)
    \]
    exists if and only if
    \[
    \lim_{\varepsilon\to0_+}\frac{1}{2\varepsilon}\lambda^n\big(\{x\in\Omega:\textup{dist}_C(x, E)\leq\varepsilon\}\big)
    \]
    exists, and if either limit exists, they are equal.
\end{itemize}
\end{rem}
We next compare $\mathfrak{M}_{\varepsilon, C}$, $\mathscr{M}_{\varepsilon, C}$, and $\mathcal{M}_{\varepsilon, C}$. The following proposition, inspired by the proof of Proposition 9 in \cite{Rataj}, gives simple inequalities between these functionals.

\begin{proposition}\label{simonka}
Let $\Omega\subseteq\mathbb{R}^n$ be open and $C\in\mathcal{C}^n_0$. We have
\[
\mathfrak{M}_{\varepsilon, C}(\partial E; \Omega)\geq\mathscr{M}_{\varepsilon, C}(E; \Omega)\geq\mathcal{M}_{\varepsilon, C}(\partial E; \Omega) \quad \text{for any $\varepsilon>0$ and measurable $E\subseteq\Omega$.}
\]
If $\overline{E}\subseteq\Omega$, the three quantities coincide. In particular, if $\Omega=\mathbb{R}^n$, we have for any $\varepsilon>0$
\[
\mathfrak{M}_{\varepsilon, C}(\partial E; \mathbb{R}^n)=\mathscr{M}_{\varepsilon, C}(E; \mathbb{R}^n)=\mathcal{M}_{\varepsilon, C}(\partial E; \mathbb{R}^n), \quad \text{$E\subseteq\mathbb{R}^n$ measurable.}
\]
Moreover,
\[
\mathscr{M}_{C}(E; \Omega) \text{ exists } \iff \text{both $\mathcal{SM}_C(E; \Omega)$ and $\mathcal{SM}_C(\Omega\setminus E; \Omega)$ exist.}
\]
In particular, if $\overline{E}\subseteq\Omega$, it holds that
\[
\mathcal{M}_{C}(\partial E; \Omega) \text{ exists } \iff \text{both $\mathcal{SM}_C(E; \Omega)$ and $\mathcal{SM}_C(\Omega\setminus E; \Omega)$ exist.}
\]
\end{proposition}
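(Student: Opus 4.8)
The plan is to establish the two pointwise (in $\varepsilon$) inequalities first, then derive the equality under $\overline{E}\subseteq\Omega$, and finally package the limiting statements using Proposition~\ref{eq:4} together with Lemma~\ref{van}.

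First I would unwind the definitions to compare the three $\varepsilon$-functionals. The key set-theoretic observation is a decomposition of the enlarged boundary: for $E\subseteq\Omega$ and $\varepsilon>0$ we have, up to $\lambda^n$-negligible sets, that $(\partial E\oplus\varepsilon C)\cap\Omega$ contains the two ``one-sided collars'' $\big((E\cap\Omega)\oplus\varepsilon C\big)\cap(\Omega\setminus E)$ and $\big(((\Omega\setminus E)\cap\Omega)\oplus\varepsilon C\big)\cap E$ whose measures are exactly $\varepsilon\,\mathcal{SM}_{\varepsilon,C}(E;\Omega)$ and $\varepsilon\,\mathcal{SM}_{\varepsilon,C}(\Omega\setminus E;\Omega)$ respectively. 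These two collars are disjoint (one lies in $\Omega\setminus E$, the other in $E$), so summing their measures and dividing by $2\varepsilon$ yields $\mathfrak{M}_{\varepsilon,C}(\partial E;\Omega)\geq\mathscr{M}_{\varepsilon,C}(E;\Omega)$. For the second inequality $\mathscr{M}_{\varepsilon,C}(E;\Omega)\geq\mathcal{M}_{\varepsilon,C}(\partial E;\Omega)$, I would check the reverse containment: the set $\big((\partial E\cap\Omega)\oplus\varepsilon C\big)\cap\Omega$ governing $\mathcal{M}_{\varepsilon,C}(\partial E;\Omega)$ is contained, up to null sets, in the \emph{union} of those two collars, because a point of $\Omega$ within $\varepsilon C$ of $\partial E$ that lies in $E$ is captured by the second collar and one that lies in $\Omega\setminus E$ by the first (here I must be slightly careful that enlarging $\partial E$ versus enlarging all of $E$ or $\Omega\setminus E$ gives the same exterior collar, which holds because $\partial E\subseteq\overline{E}$ and the Minkowski sum with the body $\varepsilon C$ only depends on the relevant boundary points near the collar).

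For the equality when $\overline{E}\subseteq\Omega$, the point is that the inclusions in the previous step become equalities up to null sets. When $\overline{E}\subseteq\Omega$ the enlargements $(E\cap\Omega)\oplus\varepsilon C$ and $\partial E\oplus\varepsilon C$ stay well inside $\Omega$ for small $\varepsilon$, so no mass is lost by intersecting with $\Omega$, and the two collars exactly tile $(\partial E\oplus\varepsilon C)\cap\Omega$ minus $E^{1}\cup E^{0}$ contributions that are $\lambda^n$-negligible; this forces $\mathfrak{M}_{\varepsilon,C}(\partial E;\Omega)=\mathscr{M}_{\varepsilon,C}(E;\Omega)=\mathcal{M}_{\varepsilon,C}(\partial E;\Omega)$. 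The specialization to $\Omega=\mathbb{R}^n$ is then immediate since every bounded $E$ satisfies $\overline{E}\subseteq\mathbb{R}^n$ trivially and the clause $\cap\Omega$ is vacuous. Here I must make sure the functionals are being applied to the same argument: $\mathfrak{M}$ and $\mathcal{M}$ act on $\partial E$ while $\mathscr{M}$ acts on $E$, which is exactly the content of the composition-with-$\partial$ notation in the statement.

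Finally I would prove the two equivalences. For the forward direction of $\mathscr{M}_C(E;\Omega)\text{ exists}\Rightarrow$ both outer contents exist: by definition $\mathscr{M}_{\varepsilon,C}(E;\Omega)=\tfrac12\big(\mathcal{SM}_{\varepsilon,C}(E;\Omega)+\mathcal{SM}_{\varepsilon,C}(\Omega\setminus E;\Omega)\big)$, so existence of $\mathscr{M}_C(E;\Omega)$ means $\limsup_{\varepsilon\to0_+}\tfrac12(\mathcal{SM}_{\varepsilon,C}(E;\Omega)+\mathcal{SM}_{\varepsilon,C}(\Omega\setminus E;\Omega))\leq\tfrac12\big(\textup{Per}_{h_C}(E;\Omega)+\textup{Per}_{h_C}(\Omega\setminus E;\Omega)\big)$; combined with the two lower bounds $\liminf\mathcal{SM}_{\varepsilon,C}(E;\Omega)\geq\textup{Per}_{h_C}(E;\Omega)$ and $\liminf\mathcal{SM}_{\varepsilon,C}(\Omega\setminus E;\Omega)\geq\textup{Per}_{h_C}(\Omega\setminus E;\Omega)$ from Proposition~\ref{eq:4}, Lemma~\ref{van} (applied with $a_\varepsilon=\mathcal{SM}_{\varepsilon,C}(E;\Omega)$, $b_\varepsilon=\mathcal{SM}_{\varepsilon,C}(\Omega\setminus E;\Omega)$, $a=\textup{Per}_{h_C}(E;\Omega)$, $b=\textup{Per}_{h_C}(\Omega\setminus E;\Omega)$) forces both limits to exist and equal their respective perimeters, i.e. both outer contents exist. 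The reverse direction is immediate by averaging. The last ``in particular'' statement then follows by substituting the already-proven equality $\mathcal{M}_{C}(\partial E;\Omega)=\mathscr{M}_{C}(E;\Omega)$ valid when $\overline{E}\subseteq\Omega$. I expect the main obstacle to be the careful null-set bookkeeping in the set-theoretic decomposition of the second paragraph—specifically verifying that replacing $\partial E$ by $E$ or $\Omega\setminus E$ in the Minkowski enlargement changes the relevant collars only on a $\lambda^n$-negligible set—since everything downstream rests on that identity.
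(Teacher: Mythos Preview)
Your proposal is correct and follows essentially the same route as the paper: establish the two set-theoretic inclusions (\ref{ali}) and (\ref{gn}) to get the chain of inequalities, then squeeze, then apply Lemma~\ref{van} together with the lower bounds from Proposition~\ref{eq:4}. Two small clarifications. First, the ``null-set bookkeeping'' you flag is handled in the paper not by working ``up to null sets'' directly but by replacing $C$ with $\operatorname{int}(C)$ in the enlargement of $\partial E\cap\Omega$: if $x\in\big((\partial E\cap\Omega)\oplus\varepsilon\operatorname{int}(C)\big)\cap\Omega$ then $x\in(\overline{E}\cap\Omega)\oplus\varepsilon_0 C$ for some $\varepsilon_0<\varepsilon$, and the slack $\varepsilon-\varepsilon_0$ together with $\overline{E}=\bigcap_{r>0}(E\oplus rC)$ lets you pass from $\overline{E}$ to $E$; then (\ref{kider}) closes the gap between $\operatorname{int}(C)$ and $C$. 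Second, the equality case $\overline{E}\subseteq\Omega$ is simpler than your sketch suggests and holds for \emph{every} $\varepsilon>0$, not just small ones: once $\partial E\subseteq\Omega$ we have $\partial E\cap\Omega=\partial E$, so $\mathfrak{M}_{\varepsilon,C}(\partial E;\Omega)=\mathcal{M}_{\varepsilon,C}(\partial E;\Omega)$ directly from the definitions, and the sandwich forces all three to coincide---no talk of $E^0,E^1$ or ``staying well inside $\Omega$'' is needed.
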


\begin{proof}
Let $\varepsilon>0$ and $E\subseteq\Omega$ be measurable. Observe first that
\begin{align}\label{ali}
\big((E\oplus\varepsilon C)\cap (\Omega\setminus E)\big)\cup\Big(\big((\Omega\setminus E)\oplus\varepsilon C\big)\cap E\Big) \subseteq \big((\partial E)\oplus \varepsilon C\big)\cap\Omega,
\end{align}
because if $x\in(E\oplus\varepsilon C)\cap (\Omega\setminus E)$, then $x\in\Omega\setminus E$ and $(x-\varepsilon C)\cap E\neq\emptyset$. The convexity of $C$ ensures that $(x-\varepsilon C)\cap \partial E\neq\emptyset$, whence $x\in\big((\partial E)\oplus \varepsilon C\big)\cap\Omega$. Similarly, if $x\in\big((\Omega\setminus E)\oplus\varepsilon C\big)\cap E$, then $x\in\big((\partial E)\oplus \varepsilon C\big)\cap\Omega$. These observations imply (\ref{ali}).

If $x\in\Big(\big((\partial E)\cap\Omega\big)\oplus \varepsilon \inter(C)\Big)\cap\Omega$, then there exists some $\varepsilon_0\in(0, \varepsilon)$ such that $x\in\big((\partial E)\cap\Omega\big)\oplus \varepsilon_0 C$. In particular, $x\in (\overline{E}\cap\Omega)\oplus \varepsilon_0 C$ and $x\in \big(\overline{\Omega\setminus E}\cap\Omega\big)\oplus \varepsilon_0 C$. If $x\in \Omega\setminus E$, we have
\[
x\in \big((\overline{E}\cap\Omega)\oplus \varepsilon_0 C\big)\cap(\Omega\setminus E)\subseteq \big(E\oplus (\varepsilon-\varepsilon_0)C\oplus\varepsilon_0 C\big)\cap(\Omega\setminus E)\subseteq (E\oplus\varepsilon C)\cap(\Omega\setminus E),
\]
since $\overline{E}=\bigcap_{r>0}E\oplus rC$. By analogy, if $x\in E$, we have
\[
x\in \Big(\big(\overline{\Omega\setminus E}\cap\Omega\big)\oplus \varepsilon_0 C\Big)\cap E\subseteq\big((\Omega\setminus E)\oplus\varepsilon C\big)\cap E.
\]
Hence,
\begin{equation}\label{gn}
\Big(\big((\partial E)\cap\Omega\big)\oplus \varepsilon \inter(C)\Big)\cap\Omega\subseteq\big((E\oplus\varepsilon C)\cap(\Omega\setminus E)\big)\cup\Big(\big((\Omega\setminus E)\oplus\varepsilon C\big)\cap E\Big).
\end{equation}

Using (\ref{ali}), (\ref{gn}) and (\ref{kider}), we obtain
\[
\mathfrak{M}_{\varepsilon, C}(\partial E; \Omega)\geq\mathscr{M}_{\varepsilon, C}(E; \Omega)\geq\mathcal{M}_{\varepsilon, C}(\partial E; \Omega).
\]
Clearly, if $\overline{E}\subseteq\Omega$, we get equalities. The remaining statements follow from Lemma \ref{van} and \cite[Theorem 3]{Villa4}.
\end{proof}

\begin{rem}\label{implik}
Proposition \ref{simonka} yields the following chain of implications:
\begin{gather*} 
\mathfrak{M}_{C}(\partial E; \Omega)\text{ exists} \implies \mathscr{M}_{C}(E; \Omega)\text{ exists} \implies \mathcal{M}_{C}(\partial E; \Omega)\text{ exists},\\
\Updownarrow \\
\mathcal{SM}_C(E; \Omega) \text{ and } \mathcal{SM}_C(\Omega\setminus E; \Omega) \text{ exist}.
\end{gather*}
Thus among the three functionals, $\mathcal{M}_C(\cdot; \Omega)$ is the weakest existence notion.
\end{rem}

We now give a simple example of a set $E\subseteq\Omega$ for which
\[
\mathfrak{M}(\partial E; \Omega) > \mathscr{M}(E; \Omega) > \mathcal{M}(\partial E; \Omega).
\]

\begin{example}
Define
\begin{align*}
\Omega &\coloneq \{x \in \mathbb{R} : |x| \in [0, 1) \cup (1, 2)\},\\
E &\coloneq \{x \in \mathbb{R} : |x| \in (1, 2)\}.
\end{align*}
Then
\[
\mathfrak{M}(\partial E; \Omega) = 3,\qquad
\mathscr{M}(E; \Omega) = 2,\qquad
\mathcal{M}(\partial E; \Omega) = 0.
\]

Indeed, $\Omega\cap\partial E=\emptyset$, because $\partial E=\{\pm1, \pm2\}$. Hence
\[
\mathcal{M}(\partial E; \Omega)=0.
\]
Moreover, for sufficiently small $\varepsilon>0$,
\[
\mathcal{SM}_{\varepsilon}(E; \Omega)
=\frac{1}{\varepsilon}\lambda^1\big(\{x\in\mathbb{R} : 1-\varepsilon<|x|<1\}\big)=2,
\]
and
\[
\mathcal{SM}_{\varepsilon}(\Omega\setminus E; \Omega)
=\frac{1}{\varepsilon}\lambda^1\big(\{x\in\mathbb{R} : 1<|x|<1+\varepsilon\}\big)=2,
\]
so both limits are equal to $2$. Therefore,
\[
\mathscr{M}(E; \Omega)=2.
\]
Finally, for sufficiently small $\varepsilon>0$,
\[
\big((\partial E)\oplus B(0, \varepsilon)\big)\cap\Omega
=
\{x\in\mathbb{R}:1-\varepsilon<|x|<1+\varepsilon\}
\;\cup\;
\{x\in\mathbb{R}:2-\varepsilon<|x|<2\},
\]
and the two terms contribute $4\varepsilon$ and $2\varepsilon$, respectively. Hence
\[
\mathfrak{M}(\partial E; \Omega)=3.
\]
\end{example}

\medskip

\section{Main Result}
\label{secMainResult}
This section is devoted to the existence of the anisotropic Minkowski content of the topological boundary of a set of finite perimeter. The proof of the following theorem relies on the Besicovitch covering theorem and Lemma \ref{crucial}.

\begin{theorem}
\label{theoremExistence}
Let $E$ be a set of finite perimeter in an open set $\Omega\subseteq\mathbb{R}^n$, and let $C, C^{\prime}\in\mathcal{C}^n_0$. Then
\[
\mathcal{M}_C(\partial E; \Omega)=\frac{1}{2}\big(\textup{Per}_{h_{C}}(E; \Omega)+\textup{Per}_{h_{C}}(\Omega\setminus E; \Omega)\big)
\]
if and only if
\[
\mathcal{M}_{C^{\prime}}(\partial E; \Omega)=\frac{1}{2}\big(\textup{Per}_{h_{C^{\prime}}}(E; \Omega)+\textup{Per}_{h_{C^{\prime}}}(\Omega\setminus E; \Omega)\big). 
\]
\end{theorem}
\begin{proof}
Since the roles of $C$ and $C^{\prime}$ are symmetric, it suffices to prove one implication.

Suppose that
\[
\mathcal{M}_C(\partial E; \Omega)=\frac{1}{2}\big(\textup{Per}_{h_{C}}(E; \Omega)+\textup{Per}_{h_{C}}(\Omega\setminus E; \Omega)\big).
\]
If $\mathcal{H}^{n-1}(\Omega\cap\partial^*E)=0$, the claim follows from Remark \ref{REMAk}. Otherwise, assume $\mathcal{H}^{n-1}(\Omega\cap\partial^*E)>0$. It suffices to show
\begin{align}\label{nikl}
\mathcal{M}_{C^{\prime}}(\partial E; \Omega)^*\leq\frac{1}{2}\big(\textup{Per}_{h_{C^{\prime}}}(E; \Omega)+\textup{Per}_{h_{C^{\prime}}}(\Omega\setminus E; \Omega)\big)\leq \mathcal{M}_{C^{\prime}}(\partial E; \Omega)_*.
\end{align}
The second inequality in (\ref{nikl}) always holds (see Proposition \ref{eq:4}), so it suffices to prove the first inequality. We decompose $\Omega\cap\partial^*E$ into finitely many Lipschitz pieces and a small remainder. The Lipschitz pieces are handled by Lemma \ref{crucial}, while the remainder is controlled by the Besicovitch covering theorem.

Since $\Omega\cap\partial^* E$ is countably $\mathcal{H}^{n-1}$-rectifiable and has finite $\mathcal{H}^{n-1}$-measure, there exists a sequence $\{S_k\}_{k=1}^{\infty}$ of pairwise disjoint compact subsets of $\Omega\cap\partial^* E$ that covers $\Omega\cap\partial^* E$ up to an $\mathcal{H}^{n-1}$-negligible set, each contained in the graph of a real-valued Lipschitz function (see Remark \ref{R1}). Since $\mathcal{H}^{n-1}(\Omega\cap\partial^* E)>0$, at least one of the sets $S_k$ is nonempty. Fix $k_0\in\mathbb{N}$ such that $S_{k_0}\neq\emptyset$. Then $\bigcup_{k=1}^K S_k\neq\emptyset$ for every $K\geq k_0$, so the distance function
\[
x\mapsto \dist\Bigl(x,\bigcup_{k=1}^K S_k\Bigr)
\]
is well defined.

We now prove the first inequality in (\ref{nikl}). Observe
\begin{equation*}
\begin{split}
\mathcal{M}_C(\partial E; \Omega)&=\frac{1}{2}\big(\textup{Per}_{h_{C}}(E; \Omega)+\textup{Per}_{h_{C}}(\Omega\setminus E; \Omega)\big)\\
&=\frac{1}{2}\int_{\partial^* E\cap\Omega}\big(h_C(\nu_E)+h_C(-\nu_E)\big)\,d\mathcal{H}^{n-1}\\
&=\sum_{k\in\mathbb{N}}\frac{1}{2}\int_{S_k}\big(h_C(\nu_E)+h_C(-\nu_E)\big)\,d\mathcal{H}^{n-1}=\sum_{k\in\mathbb{N}}\mathcal{M}_C(S_k; \Omega),
\end{split}
\end{equation*}
where the last equality follows from Lemma \ref{crucial}. For a given $\varepsilon>0$, there exists $K\geq k_0$ such that
\[
\mathcal{M}_C(\partial E; \Omega)-\sum_{k=1}^K\mathcal{M}_C(S_k; \Omega)<\varepsilon^{n}.
\]

Since $\{S_k\}_{k=1}^K$ is a finite family of pairwise disjoint compact subsets of $\Omega$, their mutual distances are positive. Therefore, for all sufficiently small $r>0$, the sets $S_k\oplus rC$, $k=1,\dots,K$, are pairwise disjoint. Hence
\begin{equation*}
\sum_{k=1}^K\mathcal{M}_C(S_k; \Omega)=\lim_{r\to0_+}\frac{1}{2r}\lambda^n\bigg(\Big(\bigcup_{k=1}^K S_k\oplus rC\Big)\cap\Omega\bigg),
\end{equation*}
so that
\begin{equation}\label{rum}
\lim_{r\to0_+}\frac{1}{2r}\lambda^n\Bigg(\Omega\cap\bigg(\Big((\Omega\cap\partial E)\oplus rC\Big)\setminus\Big(\bigcup_{k=1}^K S_k\oplus rC\Big)\bigg)\Bigg)<\varepsilon^n.
\end{equation}

Let positive constants $a$, $b$ and $c$ satisfy $B(0,a)\subseteq C$ and $B(0, c)\subseteq C^{\prime}\subseteq B(0,b)$, and define
\[
E_{r, \varepsilon}\coloneq \Big\{x\in \Omega\cap\partial E  : \dist\Big(x, \bigcup_{k=1}^K S_k\Big)>2\diam(C)r\varepsilon \Big\}\cap\Omega_{r, \varepsilon},
\]
where $\Omega_{r, \varepsilon}\coloneq \{x\in\Omega : \dist(x, \partial\Omega)>ar\varepsilon\}$ if $\partial\Omega\neq\emptyset$, and $\Omega_{r, \varepsilon}\coloneq \Omega$ otherwise. %There exists $r_0>0$ such that $E_{r, \varepsilon}\neq\emptyset$ for all $r\in(0,r_0)$. 

By Theorem \ref{bes}, there exists an at most countable set $I_{r, \varepsilon}\subseteq E_{r, \varepsilon}$ and $N\in\mathbb{N}$ such that
\begin{align}
    E_{r, \varepsilon}\subseteq \bigcup_{x\in I_{r, \varepsilon}} B(x, ar\varepsilon)&&\text{and}&&\sum_{x\in I_{r, \varepsilon}}\chi_{B(x, ar\varepsilon)}\leq N.
\end{align}
We have \begin{equation*}
    \begin{split}
        \mathcal{H}^0(I_{r, \varepsilon})\omega_n (ar\varepsilon)^n&=\sum_{x\in I_{r, \varepsilon}}\omega_n (ar\varepsilon)^n\\&=\sum_{x\in I_{r, \varepsilon}}\lambda^n\big(B(x, ar\varepsilon)\big)\\&\leq N\lambda^n\Big(\bigcup_{x\in I_{r, \varepsilon}}B(x, ar\varepsilon)\Big).
    \end{split}
\end{equation*} 
Notice
\[
\bigcup_{x\in I_{r, \varepsilon}} B(x, ar\varepsilon)\subseteq E_{r, \varepsilon}\oplus B(0, ar\varepsilon) \subseteq \bigg(\big((\Omega\cap\partial E)\oplus r\varepsilon C \big)\setminus \Big(\bigcup_{k=1}^K S_k\oplus r\varepsilon C\Big)\bigg)\cap\Omega.
\]
Since \begin{equation*}\begin{split}
\varepsilon^n&>\lim_{r\to0_+}\frac{1}{2r}\lambda^n\Bigg(\Omega\cap\bigg(\Big((\Omega\cap\partial E)\oplus rC\Big)\setminus\Big(\bigcup_{k=1}^K S_k\oplus rC\Big)\bigg)\Bigg)\\&=\lim_{r\to0_+}\frac{1}{2r\varepsilon}\lambda^n\Bigg(\Omega\cap\bigg(\Big((\Omega\cap\partial E)\oplus r\varepsilon C\Big)\setminus\Big(\bigcup_{k=1}^K S_k\oplus r\varepsilon C\Big)\bigg)\Bigg),
\end{split}
\end{equation*} inequality \eqref{rum} implies
\begin{equation}\label{frf}\limsup_{r\to0_+}\mathcal{H}^0(I_{r, \varepsilon}) r^{n-1}\leq \frac{2N\varepsilon}{a^n\omega_n}.
\end{equation}

We have $$\Omega\cap\partial E\subseteq \big((\Omega\cap\partial E)\setminus E_{r, \varepsilon}\big)\cup E_{r, \varepsilon}, \quad r>0.$$

On the one hand,
\[
(\partial E\cap\Omega)\setminus E_{r, \varepsilon}\subseteq\Big\{x\in \Omega\cap\partial E : \dist\Big(x, \bigcup_{k=1} ^K S_k\Big)\leq 2\diam(C)\varepsilon r\Big\}\cup\Omega^{\prime}_{r, \varepsilon}, \quad r>0,
\] where $$\Omega^{\prime}_{r, \varepsilon}\coloneq\{x\in \Omega\cap\partial E:\dist(x, \partial\Omega)\leq ar\varepsilon\}, \quad r>0,$$ provided that $\partial\Omega\neq\emptyset;$ otherwise, for $r>0$, put $\Omega^{\prime}_{r, \varepsilon}\coloneq\emptyset$.

Since $\Omega^{\prime}_{r, \varepsilon}\subseteq (\partial\Omega)\oplus B(0, ar\varepsilon)$ and $\bigcup_{k=1}^K S_k\subseteq\Omega$, we see that for any small enough $r>0$  $$(\Omega^{\prime}_{r, \varepsilon}\oplus rC^{\prime})\cap \Big(\bigcup_{k=1}^K S_k\oplus ba^{-1}rC\Big)=\emptyset,$$ whence $$(\Omega^{\prime}_{r, \varepsilon}\oplus rC^{\prime})\subseteq \big((\Omega\cap\partial E)\oplus ba^{-1}rC\big)\setminus\Big(\bigcup_{k=1}^K S_k\oplus ba^{-1}rC\Big),$$ which coupled with \eqref{rum} ensures that \begin{equation}\label{rumík}
    \limsup_{r\to0_+}\frac{1}{2r}\lambda^n\big((\Omega^{\prime}_{r, \varepsilon}\oplus rC^{\prime})\cap\Omega\big)<\varepsilon^n b^{-1}a
\end{equation} Furthermore, \begin{equation}\label{rumcajs}
    \begin{split}
        \lambda^n\bigg(\Big(\bigcup_{k=1}^K S_k\oplus B\big(0, 3\diam(C)\varepsilon r\big)\Big) \oplus rC^{\prime}\bigg)\leq \lambda^n\Big(\bigcup_{k=1}^K S_k\oplus r\big(1+c^{-1}3\diam(C)\varepsilon\big)C^{\prime}\Big).
    \end{split}
\end{equation}

On the other hand, 
\[
E_{r, \varepsilon} \oplus rC^{\prime}\subseteq \bigcup_{x\in I_{r, \varepsilon}} B(x, rb),
\]
whence, using \eqref{frf}, \begin{equation}\label{konec}
    \limsup_{r\to0_+}\frac{1}{2r}\lambda^n(E_{r, \varepsilon} \oplus rC^{\prime})\leq\frac{\omega_n b^n}{2}\limsup_{r\to0_+}\mathcal{H}^0(I_{r, \varepsilon})r^{n-1}\leq \frac{N b^n\varepsilon}{a^n}.\end{equation}

Finally, taking into account \eqref{rumík}, \eqref{rumcajs} and \eqref{konec}, we obtain
\[
\begin{split}
\mathcal{M}_{C^{\prime}}(\partial E; \Omega)^* &\leq \limsup_{r\to0_+}\frac{1}{2r}\lambda^n\bigg(\Big(\big((\Omega\cap \partial E)\setminus E_{r, \varepsilon}\big)\oplus rC^{\prime}\Big)\cap\Omega\bigg)\\&\quad\quad+\limsup_{r\to0_+}\frac{1}{2r}\lambda^n\big((E_{r, \varepsilon}\oplus rC^{\prime})\cap\Omega\big)\\
&\leq\limsup_{r\to0_+}\frac{1}{2r}\lambda^n\big((\Omega^{\prime}_{r, \varepsilon}\oplus rC^{\prime})\cap\Omega\big)\\&\quad\quad+\limsup_{r\to0_+}\frac{1}{2r}\lambda^n\Big(\bigcup_{k=1}^K S_k\oplus r\big(1+c^{-1}3\diam(C)\varepsilon\big)C^{\prime}\Big)\\&\quad\quad+\limsup_{r\to0_+}\frac{1}{2r}\lambda^n\big((E_{r, \varepsilon}\oplus rC^{\prime})\cap\Omega\big)\\&\leq \varepsilon^n b^{-1}a+\frac{Nb^n\varepsilon }{a^n} + \big(1+c^{-1}3\diam(C)\varepsilon\big)\sum_{k=1}^K\mathcal{M}_{C^{\prime}}(S_k; \Omega)\\&\leq \varepsilon^n b^{-1}a+\frac{Nb^n\varepsilon }{a^n} + \big(1+c^{-1}3\diam(C)\varepsilon\big)\sum_{k=1}^{\infty}\mathcal{M}_{C^{\prime}}(S_k; \Omega)\\
&= \varepsilon^n b^{-1}a+\frac{Nb^n\varepsilon }{a^n}\\&\quad\quad+ \big(1+c^{-1}3\diam(C)\varepsilon\big)\frac{1}{2}\big(\textup{Per}_{h_{C^{\prime}}}(E; \Omega)+\textup{Per}_{h_{C^{\prime}}}(\Omega\setminus E; \Omega)\big).
\end{split}
\]

Since $\varepsilon>0$ was arbitrary, we conclude
\[
\mathcal{M}_{C^{\prime}}(\partial E; \Omega)^* \leq \frac{1}{2}\big(\textup{Per}_{h_{C^{\prime}}}(E; \Omega)+\textup{Per}_{h_{C^{\prime}}}(\Omega\setminus E; \Omega)\big),
\]
which completes the proof.
\end{proof}

Using Theorem \ref{theoremExistence} and Proposition \ref{simonka}, we obtain:

\begin{corollary}\label{cor}
Let $\Omega\subseteq\mathbb{R}^n$ be open and $E\subseteq\Omega$ a set of finite perimeter with $\overline{E}\subseteq\Omega$, and let $C, C^{\prime}\in\mathcal{C}^n_0$. Then
\[
\mathcal{SM}_C(E; \Omega)=\textup{Per}_{h_C}(E; \Omega), \quad \mathcal{SM}_C(\Omega\setminus E; \Omega)=\textup{Per}_{h_C}(\Omega\setminus E; \Omega)
\]
if and only if
\[
\mathcal{SM}_{C^{\prime}}(E; \Omega)=\textup{Per}_{h_{C^{\prime}}}(E; \Omega), \quad \mathcal{SM}_{C^{\prime}}(\Omega\setminus E; \Omega)=\textup{Per}_{h_{C^{\prime}}}(\Omega\setminus E; \Omega).
\]
\end{corollary}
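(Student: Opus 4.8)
The plan is to derive the corollary purely formally, by chaining Proposition~\ref{simonka} with Theorem~\ref{theoremExistence} once both are rephrased in the ``existence'' terminology. First I would recall the relevant dictionary: by definition, the equality $\mathcal{SM}_C(E;\Omega)=\textup{Per}_{h_{C}}(E;\Omega)$ is precisely the assertion that $\mathcal{SM}_C(E;\Omega)$ \emph{exists}, and analogously for $\Omega\setminus E$ and for $C^{\prime}$; likewise, the equality $\mathcal{M}_C(\partial E;\Omega)=\tfrac12\big(\textup{Per}_{h_{C}}(E;\Omega)+\textup{Per}_{h_{C}}(\Omega\setminus E;\Omega)\big)$ is precisely the assertion that $\mathcal{M}_C(\partial E;\Omega)$ exists. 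With this reading, the corollary becomes the statement that $\mathcal{SM}_C(E;\Omega)$ and $\mathcal{SM}_C(\Omega\setminus E;\Omega)$ both exist if and only if $\mathcal{SM}_{C^{\prime}}(E;\Omega)$ and $\mathcal{SM}_{C^{\prime}}(\Omega\setminus E;\Omega)$ both exist.

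The decisive observation is that the hypothesis $\overline{E}\subseteq\Omega$ is exactly what is needed to invoke the ``in particular'' clause of Proposition~\ref{simonka}. Applying that clause once with $C$ and once with $C^{\prime}$ yields the two outer equivalences
$$\big(\mathcal{SM}_C(E;\Omega)\ \text{and}\ \mathcal{SM}_C(\Omega\setminus E;\Omega)\ \text{exist}\big)\iff \mathcal{M}_C(\partial E;\Omega)\ \text{exists},$$
together with the same biconditional having $C^{\prime}$ in place of $C$. The middle link is supplied by Theorem~\ref{theoremExistence}, applied to the pair $(C,C^{\prime})$, which asserts that $\mathcal{M}_C(\partial E;\Omega)$ exists if and only if $\mathcal{M}_{C^{\prime}}(\partial E;\Omega)$ exists. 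Reading these three biconditionals in sequence---Proposition~\ref{simonka} for $C$, then Theorem~\ref{theoremExistence}, then Proposition~\ref{simonka} for $C^{\prime}$---produces exactly the equivalence claimed in the corollary.

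Since all the genuine content is already packaged inside Theorem~\ref{theoremExistence} and Proposition~\ref{simonka}, I do not expect any real obstacle here; the proof is a one-line concatenation of equivalences. The only point requiring care is bookkeeping: one must confirm that the phrase ``$\mathcal{M}_C(\partial E;\Omega)$ exists'' carries identical meaning in both cited results---namely, coincidence with $\tfrac12\big(\textup{Per}_{h_{C}}(E;\Omega)+\textup{Per}_{h_{C}}(\Omega\setminus E;\Omega)\big)$---so that the middle equivalence genuinely dovetails with the two outer ones. Once this identification is checked, the corollary follows immediately.
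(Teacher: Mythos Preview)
Your proposal is correct and matches the paper's approach exactly: the paper simply states ``Using Theorem~\ref{theoremExistence} and Proposition~\ref{simonka}, we get the following,'' and your argument spells out precisely this chain of equivalences---Proposition~\ref{simonka} (via the $\overline{E}\subseteq\Omega$ clause) on each side, bridged by Theorem~\ref{theoremExistence} in the middle.
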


\medskip
Thus, under the natural assumption $\overline{E}\subseteq\Omega$, the validity of the expected outer Minkowski formula is independent of the chosen anisotropy.

In view of Corollary \ref{cor}, if $\mathcal{SM}_{C}(E; \Omega)=\textup{Per}_{h_{C}}(E; \Omega)$ and $\mathcal{SM}_{C}(\Omega\setminus E; \Omega)=\textup{Per}_{h_{C}}(\Omega\setminus E; \Omega)$, then $\mathcal{SM}(E; \Omega)=\Per(E; \Omega)$ and $\mathcal{SM}(\Omega\setminus E; \Omega)=\Per(E; \Omega)$. This yields a weaker converse statement than the one suggested in \cite[Remark 3.5]{chambolle}.

\begin{rem}
The existence of the $C$-anisotropic outer Minkowski content of a set of finite perimeter $E$ in $\Omega$ does \emph{not} guarantee the existence of the $C$-anisotropic outer Minkowski content of $\Omega\setminus E$. For a counterexample, take $E\coloneq\mathbb{R}\setminus \{0\}$ and $\Omega\coloneq\mathbb{R}$. Then, for any $C\in\mathcal{C}^1_0$,
\[
\mathcal{SM}_C(E; \mathbb{R})=0=\textup{Per}_{h_C}(E; \mathbb{R}),
\]
while
\[
\mathcal{SM}_C(\{0\}; \mathbb{R}) = \diam(C) \neq 0 = \textup{Per}_{h_C}(\{0\}; \mathbb{R}).
\]
A similar construction works for any open, nonempty $\Omega\subseteq \mathbb{R}^n$.
\end{rem}

\medskip

Now consider the functional $\mathfrak{M}_C$ and the reduced boundary of $E$. Suppose
\[
\mathfrak{M}_C(\partial^* E; \Omega)=\frac{1}{2}\big(\textup{Per}_{h_C}(E; \Omega)+\textup{Per}_{h_C}(\Omega\setminus E; \Omega)\big).
\]
Does this imply
\[
\mathcal{SM}_C(E; \Omega)=\textup{Per}_{h_C}(E; \Omega)?
\]
Unfortunately, this implication is false (see \cite[Remark 3.6]{chambolle}). For example, take $E\coloneq \mathbb{Q}$ and $\Omega\coloneq \mathbb{R}$. Likewise, $\mathcal{M}_C$ cannot replace $\mathfrak{M}_C$ in this implication. Nevertheless, the following improvement holds:

\begin{proposition}\label{reduced}
Let $\Omega\subseteq\mathbb{R}^n$ be open, $E\subseteq\Omega$ a set of finite perimeter in $\Omega$, and $C\in\mathcal{C}^n_0$. If
\[
\mathfrak{M}_C(\partial^* E; \Omega)=\frac{1}{2}\big(\textup{Per}_{h_C}(E; \Omega)+\textup{Per}_{h_C}(\Omega\setminus E; \Omega)\big),
\]
then
\[
\mathcal{SM}_C(E^1; \Omega)=\textup{Per}_{h_C}(E; \Omega), \quad \mathcal{SM}_C(E^0; \Omega)=\textup{Per}_{h_C}(\Omega\setminus E; \Omega).
\]
Conversely, if
\[
\mathcal{SM}_C(E^1; \Omega)=\textup{Per}_{h_C}(E; \Omega), \quad \mathcal{SM}_C(E^0; \Omega)=\textup{Per}_{h_C}(\Omega\setminus E; \Omega),
\]
then
\[
\mathcal{M}_C(\partial^* E; \Omega)=\frac{1}{2}\big(\textup{Per}_{h_C}(E; \Omega)+\textup{Per}_{h_C}(\Omega\setminus E; \Omega)\big).
\]
\end{proposition}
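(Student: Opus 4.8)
The plan is to reduce everything to a comparison between two disjoint ``shells''. Write $A \coloneq E^1 \cap \Omega$ and $B \coloneq E^0 \cap \Omega$. Since $\partial_* E = \mathbb{R}^n \setminus (E^0 \cup E^1)$ and $\mathcal{H}^{n-1}(\Omega \cap \partial_* E) = \Per(E; \Omega) < \infty$ (Remark \ref{remarko}), the set $D \coloneq \Omega \cap \partial_* E$ is $\lambda^n$-negligible and $\Omega = A \sqcup B \sqcup D$. Using $\Omega \setminus E^1 = B \cup D$ and $\Omega \setminus E^0 = A \cup D$ with $D$ null, I would first record the identities
\[
\varepsilon\,\mathcal{SM}_{\varepsilon, C}(E^1; \Omega) = \lambda^n\big((A \oplus \varepsilon C) \cap B\big), \qquad \varepsilon\,\mathcal{SM}_{\varepsilon, C}(E^0; \Omega) = \lambda^n\big((B \oplus \varepsilon C) \cap A\big),
\]
and note that the two sets on the right are disjoint (one sits in $B$, the other in $A$), so their measures add. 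Everything then comes down to comparing their sum with $\lambda^n\big((\partial^* E \oplus \varepsilon C) \cap \Omega\big) = 2\varepsilon\,\mathfrak{M}_{\varepsilon, C}(\partial^* E; \Omega)$ and with $\lambda^n\big(((\Omega\cap\partial^* E) \oplus \varepsilon C) \cap \Omega\big) = 2\varepsilon\,\mathcal{M}_{\varepsilon, C}(\partial^* E; \Omega)$.

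The core is a two-sided set inclusion. For the first (upward) inclusion I claim
\[
\big((A \oplus \varepsilon C) \cap B\big) \cup \big((B \oplus \varepsilon C) \cap A\big) \subseteq \big(\overline{\partial^* E} \oplus \varepsilon C\big) \cap \Omega .
\]
Indeed, if $x \in (A \oplus \varepsilon C) \cap B$, pick $y \in A$ with $x - y \in \varepsilon C$; convexity of $C$ together with $0 \in C$ gives $[y, x] \subseteq x - \varepsilon C$, and since $y \in E^1$ while $x \in E^0 \subseteq (E^1)^c$, connectedness forces the segment to meet the topological boundary $\partial(E^1)$. As $\partial(E^1) = \overline{\partial^* E}$ (Remark \ref{remarko}), this produces $z \in \overline{\partial^* E} \cap (x - \varepsilon C)$, whence $x \in (\overline{\partial^* E} \oplus \varepsilon C) \cap \Omega$; the case $x \in (B \oplus \varepsilon C) \cap A$ is symmetric. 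Taking $\lambda^n$ and using the closure-invariance $\lambda^n\big((\overline{\partial^* E} \oplus \varepsilon C) \cap \Omega\big) = \lambda^n\big((\partial^* E \oplus \varepsilon C) \cap \Omega\big)$ — which follows from $\overline{S} \oplus \varepsilon \inter(C) \subseteq S \oplus \varepsilon C \subseteq \overline{S} \oplus \varepsilon C$ combined with (\ref{kider}) — I obtain the pointwise estimate $\mathcal{SM}_{\varepsilon, C}(E^1; \Omega) + \mathcal{SM}_{\varepsilon, C}(E^0; \Omega) \le 2\,\mathfrak{M}_{\varepsilon, C}(\partial^* E; \Omega)$.

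The first implication is then immediate from Lemma \ref{van}. Setting $a_\varepsilon = \mathcal{SM}_{\varepsilon, C}(E^1; \Omega)$, $b_\varepsilon = \mathcal{SM}_{\varepsilon, C}(E^0; \Omega)$, $a = \Per_{h_C}(E; \Omega)$ and $b = \Per_{h_C}(\Omega \setminus E; \Omega)$, the hypothesis $\mathfrak{M}_C(\partial^* E; \Omega) = \tfrac12(a+b)$ and the estimate above give $\limsup_{\varepsilon \to 0_+}(a_\varepsilon + b_\varepsilon) \le a + b$, while Proposition \ref{eq:4} applied to $E^1$ and $E^0$ (whose $C$-anisotropic perimeters equal $\Per_{h_C}(E; \Omega)$ and $\Per_{h_C}(\Omega \setminus E; \Omega)$, since these sets differ from $E$ and $\Omega\setminus E$ by null sets) yields $\liminf a_\varepsilon \ge a$ and $\liminf b_\varepsilon \ge b$. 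Lemma \ref{van} forces $\lim a_\varepsilon = a$ and $\lim b_\varepsilon = b$, which is exactly the first assertion.

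For the converse I would prove the reverse (downward) inclusion
\[
\big((\Omega \cap \partial^* E) \oplus \varepsilon \inter(C)\big) \cap \Omega \subseteq \big((A \oplus \varepsilon C) \cap B\big) \cup \big((B \oplus \varepsilon C) \cap A\big) \cup D.
\]
Given $x$ on the left with witness $z \in \Omega \cap \partial^* E$ and $x - z \in \varepsilon \inter(C)$, I use that $z$ is a reduced-boundary point, so both $E$ and its complement have density $\tfrac12$ there; since $\lambda^n(E \triangle E^1)=0$, this gives $\lambda^n(E^1 \cap B(z, \delta)) > 0$ and $\lambda^n(E^0 \cap B(z, \delta)) > 0$ for all small $\delta$. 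If $x \in B$, choosing $y \in E^1 \cap B(z, \delta) \subseteq A$ with $\delta$ small enough that $x - y \in \varepsilon C$ (possible as $x-z \in \varepsilon\inter(C)$ is open, and $B(z,\delta)\subseteq\Omega$ for small $\delta$) places $x \in (A \oplus \varepsilon C) \cap B$; symmetrically for $x \in A$. Discarding $D$, taking measures, and using (\ref{kider}) to pass from $\inter(C)$ to $C$ gives $\mathcal{M}_{\varepsilon, C}(\partial^* E; \Omega) \le \tfrac12\big(\mathcal{SM}_{\varepsilon, C}(E^1; \Omega) + \mathcal{SM}_{\varepsilon, C}(E^0; \Omega)\big)$; letting $\varepsilon \to 0_+$ under the hypotheses $\mathcal{SM}_C(E^1; \Omega) = \Per_{h_C}(E; \Omega)$ and $\mathcal{SM}_C(E^0; \Omega) = \Per_{h_C}(\Omega \setminus E; \Omega)$ bounds $\mathcal{M}_C(\partial^* E; \Omega)^*$ above by $\tfrac12(\Per_{h_C}(E; \Omega) + \Per_{h_C}(\Omega \setminus E; \Omega))$, and the matching lower bound for $\mathcal{M}_C(\partial^* E; \Omega)_*$ is Proposition \ref{eq:4}, so the limit exists and equals the claimed value. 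The main obstacle is exactly the bookkeeping in these two inclusions that makes the \emph{reduced} boundary $\partial^* E$ appear on the right side in each direction: upward I must trade $\partial(E^1)$ for $\partial^* E$ via $\partial(E^1) = \overline{\partial^* E}$ and the closure-invariance of the tube volume, while downward I must manufacture genuine density-$0$ and density-$1$ points arbitrarily close to an arbitrary reduced-boundary point.
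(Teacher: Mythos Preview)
Your proof is correct, but it takes a genuinely different route from the paper's. The paper argues indirectly through the machinery already built up: it first observes (via the distance-function formula (\ref{eq:54}) and the identity $\overline{\partial^* E}=\partial(E^1)=\partial(E^0)$ from Remark~\ref{remarko}) that $\mathfrak{M}_{\varepsilon,C}(\partial^* E;\Omega)=\mathfrak{M}_{\varepsilon,C}(\partial(E^1);\Omega)=\mathfrak{M}_{\varepsilon,C}(\partial(E^0);\Omega)$, and then invokes Proposition~\ref{simonka} and Remark~\ref{implik} (the chain $\mathfrak{M}\geq\mathscr{M}\geq\mathcal{M}$ and its consequences) to deduce the existence of $\mathcal{SM}_C(E^1;\Omega)$ and $\mathcal{SM}_C(E^0;\Omega)$; for the converse it uses (\ref{min4}), Proposition~\ref{simonka}, and Proposition~\ref{last}. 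Your argument is more elementary and self-contained: you bypass Propositions~\ref{simonka} and \ref{last} entirely by proving two explicit set inclusions between the disjoint ``shells'' $(A\oplus\varepsilon C)\cap B$, $(B\oplus\varepsilon C)\cap A$ and the tube $(\partial^* E\oplus\varepsilon C)\cap\Omega$, using only the segment-crossing argument (upward) and the density-$\tfrac12$ property of reduced-boundary points (downward), then finishing with Lemma~\ref{van} and Proposition~\ref{eq:4}. What your approach buys is that it works directly with $\partial^* E$ without ever passing through $\partial(E^1)$ as an intermediary, and it makes transparent exactly which geometric facts are doing the work; what the paper's approach buys is modularity---once Propositions~\ref{simonka} and \ref{last} are in hand, the proof is three lines.
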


\begin{proof}
Remark \ref{remarko} gives $\overline{\partial^* E} = \partial(E^1) = \partial(E^0)$. If $\partial^* E = \emptyset$, the result is trivial. Otherwise, from (\ref{eq:54}),
\[
\mathfrak{M}_{\varepsilon, C}(\partial^* E; \Omega)=\mathfrak{M}_{\varepsilon, C}(\partial(E^1); \Omega)=\mathfrak{M}_{\varepsilon, C}(\partial(E^0); \Omega).
\]
 Thus both $\mathfrak{M}_C(\partial(E^1); \Omega)$ and $\mathfrak{M}_C(\partial(E^0); \Omega)$ exist. Applying Remark \ref{implik} to $E^1$ and to $E^0$, we conclude that both $\mathcal{SM}_C(E^1; \Omega)$ and $\mathcal{SM}_C(E^0; \Omega)$ exist.

Conversely, assume that
\[
\mathcal{SM}_C(E^1; \Omega)=\textup{Per}_{h_C}(E; \Omega), 
\qquad
\mathcal{SM}_C(E^0; \Omega)=\textup{Per}_{h_C}(\Omega\setminus E; \Omega).
\]
By (\ref{min4}), we also have
\[
\mathcal{SM}_C(\Omega\setminus E^1; \Omega)=\mathcal{SM}_C(E^0; \Omega),
\qquad
\mathcal{SM}_C(\Omega\setminus E^0; \Omega)=\mathcal{SM}_C(E^1; \Omega).
\]
Hence both $\mathscr{M}_C(E^1; \Omega)$ and $\mathscr{M}_C(E^0; \Omega)$ exist. By Proposition \ref{simonka}, both
\[
\mathcal{M}_C(\partial(E^1); \Omega)
\quad\text{and}\quad
\mathcal{M}_C(\partial(E^0); \Omega)
\]
exist. Since $\lambda^n((E^1\Delta E)\cap\Omega)=0$, we have
\[
\textup{Per}_{h_C}(E^1;\Omega)=\textup{Per}_{h_C}(E;\Omega),
\qquad
\textup{Per}_{h_C}(\Omega\setminus E^1;\Omega)=\textup{Per}_{h_C}(\Omega\setminus E;\Omega).
\]
Therefore, Remark \ref{EWQ} applied to $E^1$ yields
\[
\mathcal{M}_C(\partial^* (E^1); \Omega)
=
\frac{1}{2}\big(\textup{Per}_{h_C}(E; \Omega)+\textup{Per}_{h_C}(\Omega\setminus E; \Omega)\big).
\]
Since $\partial^*(E^1)=\partial^*E$, the claim follows.
\end{proof}

\begin{rem}
    Proposition \ref{reduced} together with (\ref{min4}) yields the following chain of implications:
    \begin{gather*} 
 \mathfrak{M}_{C}(\partial^* E; \Omega)\text{ exists}\implies\mathscr{M}_{C}(E^1; \Omega)\text{ exists}\implies\mathcal{M}_{C}(\partial^* E; \Omega)\text{ exists}\\
                    \Updownarrow                                         \\
    \mathcal{SM}_C(E^1; \Omega) \text{ and } \mathcal{SM}_C(E^0; \Omega)\text{ exist}
\end{gather*} 
\end{rem}

\end{document}